\newtheorem{thm}{Theorem}[section]
\newtheorem{pro}[thm]{Proposition}
\newtheorem{lem}[thm]{Lemma}
\newtheorem{cor}[thm]{Corollary}
\def\leukfrac#1/#2{\leavevmode
               \kern.1em
                \raise.9ex\hbox{\the\scriptfont0 ${}_#1$}
                \hskip -1pt\kern-.1em
                /\kern-.15em\lower.10ex\hbox{\the\scriptfont0 ${}_#2$}}
\newtheorem{qu}[thm]{Question}
\theoremstyle{definition}
\theoremstyle{remark}
\newtheorem{claim}{Claim}
\def\Int{\mathop{\operator@font Int}\nolimits}
\begin{document}

\title[Homological characterizations of $Q$-manifolds and $l_2$-manifolds]
{Homological characterizations of $Q$-manifolds and $l_2$-manifolds}

\author{Alexandre Karassev}
\address{Department of Computer Science and Mathematics, Nipissing University,
100 College Drive, P.O. Box 5002, North Bay, ON, P1B 8L7, Canada}
\email{alexandk@nipissingu.ca}
\thanks{The first author was partially supported by NSERC Grant 257231-14}

\author{Vesko  Valov}
\address{Department of Computer Science and Mathematics, Nipissing University,
100 College Drive, P.O. Box 5002, North Bay, ON, P1B 8L7, Canada}
\email{veskov@nipissingu.ca}
\thanks{The second author was partially supported by NSERC Grant 261914-19}

\keywords{Disjoint $n$-disks property, homological maps, $l_2$-manifolds, $Q$-manifolds}

\subjclass{Primary 57N20; Secondary 58B05, 54C10}

%%%%%%%%%% End topmatter %%%%%%%%%%%%%%%%%%%%%

\begin{abstract}
We investigate to what extend the density of $Z_n$-maps in the characterization of $Q$-manifolds, and the density of maps
$f\in C(\mathbb N\times Q,X)$ having discrete images in the $l_2$-manifolds characterization can be weakened to the density of homological $Z_n$-maps and homological $Z$-maps, respectively. As a result, we obtain homological characterizations of $Q$-manifolds and $l_2$-manifolds.
\end{abstract}

\maketitle

\markboth{}{Homology characterizations}

%%%%%%%%%%%%%%%%%%%%%%%%%%%%%%%%%%%%%%%%%%%%%%%%%%%%%%%%%%%%%%%%
%%%%%%%%%%%%%%%%%%%%%%%%%%%%%%%%%%%%%%%%%%%%%%%%%%%%%%%%%%%%%%%%

%%%%%%%%%%%%%%%%%%%%%%%%%%%%%%%%%%%%%%%%%%%%%%%%%%%%%%%%%%%%%%%%%%%%%%

\section{Introduction and preliminary results}
By a space we always mean a complete separable metric space without isolated points.

The well-known Toru\'{n}czyk's fundamental characterizations of manifolds modeled on $Q=[-1,1]^\infty$ and $l_2$ states that a locally compact separable $ANR$-space $X$ is a $Q$-manifold if and only if $X$ satisfies the disjoint $n$-disks property for every $n$, \cite{to1}. Equivalently, for every $n$ the function space $C(\mathbb B^n,X)$ contains a dense set of $Z_n$-maps. Similarly, a complete separable $ANR$-space is $l_2$-manifold iff
$X$ has the discrete approximations property: $C(\mathbb N\times Q,X)$ contains a dense set of maps $f$ such that the family $\{f(\{n\}\times Q):n\in\mathbb N\}$ is discrete in $X$, see \cite{to3}. Here, both function spaces $C(Q,X)$ and $C(\mathbb N\times Q,X)$ are equipped with the limitation topology, and $\mathbb B^n$ is the $n$-dimensional ball. Daverman and Walsh \cite{dw} (see also \cite{lw}) refine Toru\'{n}czyk's $Q$-manifolds characterization by combining the disjoint $2$-disks property and the disjoint \v{C}ech carriers property (the latter property means that \v{C}ech homology elements can be made disjoint).
Banakh and Repov\v{s} \cite{br} observed that the disjoint \v{C}ech carriers property in Daverman-Wash's characterization can be replaced by the following one: $C(K,X)$ contains a dense set of homological $Z$-maps for every compact polyhedron $K$. Bowers \cite{bo} provided an $l_2$-version of Daverman-Walsh's result for spaces having nice $ANR$ local compactifications.

In the present paper we investigate to what extend the density of $Z_n$ maps in the characterization of $Q$-manifolds, and the density of maps
$f\in C(\mathbb N\times Q,X)$ having discrete images in the $l_2$-manifolds characterization can be weakened to the density of homological $Z_n$-maps and homological $Z$-maps, respectively.
In Section 2 we establish relations between density
of homological $Z_n$-maps in $C(\mathbb B^n,X)$ and the disjoint $n$-carriers property. As a corollary we obtain a proof of Banach-Repov\v{s} result mentioned above, and obtain a characterization of spaces having a nice $ANR$ local compactification that is a $Q$-manifold. We also discuss the question whether for any locally compact $ANR$-space $X$ we have that $X\times\mathbb B^1$ is a $Q$-manifold if and only if for any $n$ the space $C(\mathbb B^n,X)$ contains a dense set of homological $Z_n$-maps. This can be compared to Daverman-Walsh's result \cite{dw} that a locally compact $ANR$-space has the disjoint \v{C}ech carriers property if and only if $X\times\mathbb B^2$ is a $Q$-manifold.

In Section 3 we provide a homological characterizations of $l_2$-manifolds. Two cases are considered, the case of the boundary set setting (when the spaces under consideration have ``nice" $ANR$ local compactifications) and the general case of $ANR$-spaces. Among $ANR$-spaces $l_2$-manifolds are exactly the spaces $X$ having the discrete $2$-cells property such that $C(\mathbb N\times Q,X)$ contains a dense $G_\delta$-set of homological $Z_\infty$-maps (see Corollary 3.6). A combination of the disjoint disks property and the existence of a dense $G_\delta$-set in $C(\mathbb N\times\mathbb B^n,X)$ of homological $Z_n$-maps for every $n$ characterizes $l_2$-manifolds in the boundary set setting (Corollary 3.2).

 %If not stated otherwise, all homology groups are singular.
%For a compact space $K$ and a metric space $X$ we denote by $C(K,X)$ the space of all continuous maps from $K$ into $X$ topologized either by the compact-open topology or by the limitation topology. In case $K$ is compact both topologies on $C(K,X)$ coincide, see \cite{to3}.
%with the uniform convergence topology generated by a compatible metric on $X$.
The singular and \v{C}ech homology groups are denoted, respectively, by $H_k$ and $\check{H}_k$.
If $V\subset U$ are open subsets of $X$ and $z\in \check{H}_q(U,V)$ for some integer $q\geq 0$,  a compact pair $(C,\partial C)\subset (U,V)$ is said to be a {\em \v{C}ech carrier} \cite{dw} for $z$  provided $z\in i_*(\check{H}_q(C,\partial C))$, where $i_*:\check{H}_q(C,\partial C)\to \check{H}_q(U,V)$ is the inclusion-induced homomorphism. A {\em singular carrier} of an element $z\in H_q(U,V)$ is defined in a similar way.
%Since singular homology has compact supports, every $z\in H_q(U,V)$ has a carrier.
Following \cite{dw}, we say that a space $X$ has the {\em disjoint $n$-carriers property} (br., $\rm{DC^nP}$) provided for any open in $X$ sets $V_i\subset U_i$, $i=1,2$,  and any elements $z_i\in \check{H}_{q(i)}(U_i,V_i)$ with $0\leq q(i)\leq n$ there are \v{C}ech carriers $(C_i,\partial C_i)\subset (U_i,V_i)$ for $z_i$ such that $C_1\cap C_2=\varnothing$. A space $X$ has the {\em disjoint \v{C}ech carriers property} (br., $\rm{DCP}$) provided $X\in\rm{DC^nP}$ for every $n$.
The disjoint $n$-carriers property is a homological analogue of the well-known {\em disjoint $n$-disks property} (br., $\rm{DD^nP}$): any two maps $\mathbb B^n\to X$ can be approximated by maps with disjoint images.  Recall that a closed set $F\subset X$ is said to be a {\em $Z_n$-set} if the set $C(\mathbb B^n,X\setminus F)$ is dense in $C(\mathbb B^n,X)$. Note that if $X$ is a $LC^{n-1}$-space, then a closed set $F\subset X$ is $Z_n$-set iff for each at most $n$-dimensional metric compactum $Y$ the set $\{f\in C(Y,X):f(Y)\cap F=\varnothing\}$ is dense in $C(Y,X)$, see \cite{b}.
We also say that a map $f:K\to X$, where $K$ is a compactum, is a {\em $Z_n$-map} provided $f(K)$ is a $Z_n$-set in $X$.
According to \cite{to1},
$X\in\rm{DD^nP}$ if and only if the set of all $Z_n$-maps  $\mathbb B^n\to X$ is dense and $G_\delta$ in $C(\mathbb B^n,X)$.

Replacing $\mathbb B^n$ in the above definition of $Z_n$-sets with $Q$, we obtain the definition of $Z_n$-sets and $Z_n$-maps.
It is well known that a set is a $Z$-set iff its a $Z_n$-set for every $n$.
 A closed set $A\subset X$ is a {\em strong $Z$-set} \cite{bbmw} if for every open cover $\mathcal U$ of $X$ and a sequence of maps $\{f_i\}\subset C(Q,X)$, there is a sequence $\{g_i\}\subset C(Q,X)$ such that each $g_i$ is $\mathcal U$-close to $f_i$ and $\overline{\bigcup_{i\geq 1}g_i(Q)}\cap A=\varnothing$.

%Our first result describes $Z_n$-subsets of $LC^n$-spaces.
Recall that a space $X$ is $LC^n$ if for every $x\in X$ and its neighborhood $U$ in $X$ there is another neighborhood $V$ of $x$ such that $V\stackrel{m}{\hookrightarrow}U$ for all $m\leq n$ (here $V\stackrel{m}{\hookrightarrow}U$ means that $V\subset U$ and every map from the $m$-dimensional sphere $\mathbb S^m$ into $V$ can be extended to a map $\mathbb B^{m+1}$ to $U$). We also say that a set $A\subset X$ is
$k-LCC$ in $X$ if for every point $x\in A$ and its neighborhood $U$ in $X$ there exists another neighborhood  $V$ of $x$ with $V\subset U$ and $V\setminus A\stackrel{k}{\hookrightarrow}U\setminus A$. If $A$ is $k-LCC$ in $X$ for all $k\leq n$, then $A$ is said to be $LCC^n$ in $X$.

There are homological analogues of $Z_n$-sets and $Z_n$-maps. A closed set $F\subset X$ is called a {\em homological $Z_n$-set in $X$} if the singular homology groups $H_k(U,U\setminus F)$ are trivial for all open sets $U\subset X$ and all $k\leq n$, see \cite{bck}. It can be shown that every homological $Z_n$-set in $X$ is nowhere dense. The homological $Z_n$-property is finitely additive and hereditary with respect to closed subsets \cite{bck}.
A map $f:K\to X$  is a {\em homological $Z_n$-map} provided the image $f(K)$ is a homological $Z_n$-set in $X$. Homological $Z_\infty$-sets were considered in \cite{dw} under the name sets of infinite codimension.

%\begin{pro}\cite{bck}\label{homotopical}
%A closed subset $A$ of an $LC^1$-space is locally $n$-negligible, where $n\geq 2$, if and only if $A$ is locally a 2-negligible homological $Z_n$-set in $X$.
%\end{pro}

Combining \cite[Corollary 3.3]{to2} and \cite[Theorem 2.1]{bck}, we have the following:
\begin{pro}\label{z}
Let $X$ be an $LC^{n}$-space with $n\geq 2$. Then, a closed subset $A$ of $X$ is a $Z_n$-set in $X$ provided $A$ is an $LCC^1$ homological $Z_n$-set in $X$. Equivalently, $A$ is a $Z_n$-set iff it is a $Z_2$-set and a homological $Z_n$-set.
In particular, $Z_n$-sets in $LC^n$-spaces are homological $Z_n$-sets.
\end{pro}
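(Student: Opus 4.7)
The plan is to derive all three assertions from the two cited results. Toruńczyk's \cite[Corollary 3.3]{to2} characterizes $Z_n$-sets in an $LC^n$-space as closed sets $A$ that are $LCC^{n-1}$ together with the analogous homotopy extension condition through $X\setminus A$, while \cite[Theorem 2.1]{bck} is a local Hurewicz-type statement: it converts the homological condition $H_k(U,U\setminus A)=0$ ($k\leq n$) into the homotopy condition $LCC^{n-1}$, provided one already knows a lower-dimensional starting condition such as $LCC^1$.

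For the main implication I would first prove the bootstrap. Assume $A$ is $LCC^1$ and $H_k(U,U\setminus A)=0$ for every open $U\subset X$ and every $k\leq n$. Given $x\in A$ and a neighborhood $U$ of $x$, the $LC^n$ property of $X$ and the $LCC^1$ property of $A$ produce a nested sequence of neighborhoods $V\subset W\subset U$ of $x$ in which $\pi_0$ and $\pi_1$ of the complements inject/vanish appropriately. Triviality of the relative singular homology groups $H_k(W,W\setminus A)$ for $k\leq n$, combined with the Hurewicz theorem for the pair $(W,W\setminus A)$ (whose hypothesis on $\pi_1$ is provided by the $LCC^1$ input and the $LC^n$ condition of $X$), inductively forces $\pi_k(W,W\setminus A)=0$ for $k\leq n$. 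This is precisely the conclusion of \cite[Theorem 2.1]{bck} and gives $A\in LCC^{n-1}$. Applying \cite[Corollary 3.3]{to2} in the $LC^n$-space $X$ then yields that $A$ is a $Z_n$-set.

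Next I would deduce the equivalence. The implication ``$Z_n$ implies $Z_2$'' is trivial, and the implication ``$Z_n$ implies homological $Z_n$'' is the last assertion of the proposition: this follows from \cite[Theorem 2.1]{bck}, since a $Z_n$-set in an $LC^n$-space satisfies the hypotheses and hence the homological vanishing built into that theorem. Conversely, a $Z_2$-set in $X$ is automatically $LCC^1$ (any map of $\mathbb S^0$ or $\mathbb S^1$ into the complement can be filled by a $2$-disk that is then pushed off $A$, since $X$ is $LC^2$ and $A$ is a $Z_2$-set), so ``$Z_2$ and homological $Z_n$'' falls into the hypothesis of the first implication and yields ``$Z_n$''.

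The step I expect to be the main obstacle is the Hurewicz bootstrap from $LCC^1$ plus homological $Z_n$ to $LCC^{n-1}$, because it requires simultaneously choosing the correct nested local neighborhoods so that the relative Hurewicz theorem applies at every dimension $k\leq n$, using the $LC^n$ property of the ambient space to control extensions through $U$ and the $LCC^1$ property to handle $\pi_1(U\setminus A)$. Once this local homotopy extension condition is established, the conclusion is immediate from Toruńczyk's criterion, and the two remaining parts of the proposition are short formal consequences.
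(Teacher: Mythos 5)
Your proposal follows the same route as the paper, which gives no argument beyond the one-line remark that the proposition is obtained by combining \cite[Corollary 3.3]{to2} (the $LCC^{n-1}$ characterization of $Z_n$-sets in $LC^n$-spaces) with \cite[Theorem 2.1]{bck} (the homological-to-homotopical upgrade under the $LCC^1$/$Z_2$ hypothesis); your additional sketch of the local Hurewicz bootstrap and the formal deductions of the equivalence and the ``in particular'' clause are exactly the content packaged in those two citations. The reconstruction is correct and matches the paper's intent.
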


\section{Homological $Z_n$-maps and $Q$-manifolds}
  It is well known that for every pair $(X,A)$ with $A\subset X$ there is a natural homomorphism
$T_{X,A}:H_*(X,A)\to\check{H}_*(X,A)$. Recall that a space $X$ is called homologically locally connected up to dimension $n$ if for any point $x$ in $X$ and any neighbourhood $U$ of $x$ there exists a neighbourhood $V$ of $x$ such that $V\subset U$ and the inclusion-induced homomorphism $H_k (V) \to H_k (U)$ is trivial for all $k\le n.$ If both $X$ and $A$ are homologically locally connected with respect to the singular homology up to dimension $n$, then
$T_{X,A}:H_k(X,A)\to\check{H}_k(X,A)$ is an isomorphism for all $k\leq n$, see \cite{m}. In particular, this is true if $X$ and $A$ are $LC^{n}$-spaces.
We say that an element $z\in H_k(U,V)$ has a {\em homological \v{C}ech $Z_n$-carrier} provided $z$ has a \v{C}ech
carrier $(C_z,\partial C_z)\subset (U,V)$ of $z$ such that $C_z$ is a homological $Z_n$-set in $X$.

The following result is well-known.
\begin{lem}
Let $X$ be an $LC^{n}$-space and $V\subset U$ open in $X$. Then $H_k(U,V)$ is countable for all $k\leq n$.
\end{lem}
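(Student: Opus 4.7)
The plan is to reduce the statement to the absolute case via the long exact sequence of the pair, and then invoke the classical fact that every separable metric $LC^n$-space has countable singular homology in dimensions $\le n$.

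Being $LC^n$ is inherited by open subspaces, so both $U$ and $V$ are themselves separable metric $LC^n$-spaces. The long exact sequence
\[
H_k(V)\longrightarrow H_k(U)\longrightarrow H_k(U,V)\longrightarrow H_{k-1}(V)\longrightarrow H_{k-1}(U)
\]
exhibits $H_k(U,V)$ as an extension of a subgroup of $H_{k-1}(V)$ by a quotient of $H_k(U)$. So for $k\le n$ it suffices to prove the absolute claim that $H_j(W)$ is countable for $j\le n$ whenever $W$ is a separable metric $LC^n$-space; applied to $W=U$ and $W=V$ this yields the lemma.

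For this absolute claim, the method of choice is a nerve-approximation argument. By separability and the $LC^n$ hypothesis one can build a cofinal sequence of countable open covers $\mathcal U_1\succ \mathcal U_2\succ\cdots$ of $W$ whose meshes tend to zero, such that the canonical partial realizations $W\to |N(\mathcal U_m)|$ into the nerves induce a direct system computing $H_k(W)$ through dimension $n$. The crucial input is the classical partial-realization theorem for $LC^n$-spaces: since the local homotopy invariants of $W$ vanish up to dimension $n$, every singular $k$-cycle ($k\le n$) together with a choice of bounding relation may be realized in a sufficiently fine nerve. Each simplicial homology group of a countable polyhedron is countable, and a direct limit of countable groups over a countable directed set is countable, so $H_k(W)$ is countable.

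The only substantive step is constructing the nerve approximation and verifying its compatibility with singular homology up to dimension $n$; everything else is routine bookkeeping with the long exact sequence. Since the authors call the lemma ``well known,'' the write-up in the paper can simply cite the standard results on singular versus \v{C}ech homology of $LC^n$-spaces (for instance the reference \cite{m}) and then invoke the long exact sequence displayed above.
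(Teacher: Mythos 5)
The paper offers no argument for this lemma at all (it is labelled ``well-known''), so there is no internal proof to compare yours with; the closest indication of the intended argument is the proof of Lemma~\ref{G-delta}, which works with a countable dense family of singular chains in the compact-open topology together with the fact that in an $LC^{n}$-space sufficiently close maps of at most $n$-dimensional compacta are homotopic by small homotopies. That gives a direct argument for the relative groups: every class in $H_k(U,V)$, $k\le n$, is represented by a chain from a fixed countable dense family of chains in $(U,V)$, provided one perturbs a relative cycle as a map of a finite simplicial pair (not simplex-by-simplex, so that the cancellations in its boundary are preserved and the boundary stays in $V$). Your reduction to the absolute case via the long exact sequence is correct, and $LC^{n}$ does pass to open subspaces, but the reduction is unnecessary if one argues on chains directly.

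The genuine gap is in your only substantive step, the countability of $H_j(W)$ for a separable metric $LC^{n}$-space $W$, $j\le n$. You assert that the ``canonical partial realizations $W\to |N(\mathcal U_m)|$'' induce a \emph{direct} system computing $H_j(W)$. This points the arrows the wrong way and conflates two notions: the canonical (partition-of-unity) maps go from $W$ to the nerves and produce an \emph{inverse} system, i.e.\ \v{C}ech homology, and an inverse limit of countable groups need not be countable; for the same reason your closing suggestion to ``simply cite'' the singular-versus-\v{C}ech comparison theorem of \cite{m} together with the long exact sequence does not yield countability. What the $LC^{n}$ hypothesis actually provides (via the realization theorem for fine covers) are maps in the opposite direction: realizations $\varphi_m\colon |N(\mathcal U_m)^{(n+1)}|\to W$ for a countable cofinal family of countable covers, with the property that every singular cycle of dimension $j\le n$ is homologous to one in the image of some $\varphi_{m*}$; then $H_j(W)=\bigcup_m \operatorname{im}\varphi_{m*}$ is a countable union of images of countable groups, hence countable. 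With that correction (or with the chain-density argument above) your proof is complete; as written, the key claim is unsupported.
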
%Everywhere below $X$ is a locally compact separable metric $LC^{n-1}$-space with $\dim X=n$.

%\begin{proof}
%Let $M_k$ be a countable dense set in $C(\mathbb B^k,X)$, and $\mathcal H_k(U,V)$ consists of all $z\in H_k(U,V)$ such that $z$ is represented by a singular chain $\sum_{i=1}^{k}m_if_i$ for some integers $m_i$ and maps $f_i\in M_k$. Clearly  $\mathcal H_k(U,V)$ is countable. Let show that
%$\mathcal H_k(U,V)= H_k(U,V)$. Indeed, if $z\in H_k(U,V)$ and $c_z=\sum_{i=1}^{k}m_ig_i$ is a singular representation of $z$ with $\partial c_z\subset V$, then for each $i$ there is $f_i\in M_k$ such that $f_i$ is homotopic to $g_i$ and $f_i(\mathbb B^k)\subset U$, $f_i(\partial\mathbb B^k)\subset V$ (this can be done because $X\in LC^n$ and $M_k$ is dense in $C(\mathbb B^k,X)$). Hence, $c'_z=\sum_{i=1}^{k}m_if_i$ is a singular representation of $z$, which means that
%$z\in \mathcal H_k(U,V)$. Therefore, $H_k(U,V)$ is countable.
%\end{proof}

%We say that a compact pair $(C_z,\partial C_z)\subset (U,V)$ is a singular carrier for $z\in H_k(U,V)$ if $x\in i_*(H_k(C,\partial C))$, where $i_*:H_k(C,\partial C)\to H_k(U,V)$ is the inclusion-induced homomorphism.
\begin{lem}\label{carrier}
Let $X$ be an $LC^n$-space and $(U,V)$ a pair of open sets in $X$. If $(C_z,\partial C_z)\subset (U,V)$ is a singular carrier for some $z\in H_k(U,V)$ with $k\leq n$, then $(C_z,\partial C_z)$ is also a \v{C}ech carrier for $z$.
\end{lem}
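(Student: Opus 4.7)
The plan is to reduce the statement to a one-step diagram chase that uses two ingredients: (i) the natural transformation $T$ from singular to \v{C}ech homology, and (ii) the fact that on the pair $(U,V)$ this transformation is an isomorphism for $k\leq n$. The implicit identification is that $z\in H_k(U,V)$ is identified with its image $T_{U,V}(z)\in\check{H}_k(U,V)$, and the claim is that $(C_z,\partial C_z)$ is a \v{C}ech carrier for $T_{U,V}(z)$.

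First I would note that the $LC^n$ property is local, so the open subsets $U$ and $V$ of the $LC^n$-space $X$ are themselves $LC^n$. By the result quoted in the paragraph preceding the lemma, this implies that $T_{U,V}\from H_k(U,V)\to\check{H}_k(U,V)$ is an isomorphism for every $k\leq n$, so the identification above is legitimate. By hypothesis, there exists $w\in H_k(C_z,\partial C_z)$ with $i_*(w)=z$, where $i\from(C_z,\partial C_z)\hookrightarrow(U,V)$ denotes the inclusion of pairs.

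Next I would invoke naturality of $T$ with respect to the map $i$, which gives the commutative square
\[
\begin{CD}
H_k(C_z,\partial C_z) @>i_*>> H_k(U,V) \\
@VT_{C_z,\partial C_z}VV @VVT_{U,V}V \\
\check{H}_k(C_z,\partial C_z) @>\check{i}_*>> \check{H}_k(U,V).
\end{CD}
\]
Chasing $w$ around the square yields
\[
T_{U,V}(z)=T_{U,V}(i_*(w))=\check{i}_*\bigl(T_{C_z,\partial C_z}(w)\bigr),
\]
which exhibits $T_{U,V}(z)$ as an element of $\check{i}_*\bigl(\check{H}_k(C_z,\partial C_z)\bigr)$, i.e.\ as having $(C_z,\partial C_z)$ as a \v{C}ech carrier.

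I do not expect any real obstacle: the whole argument rests on naturality of $T$ and the comparison theorem for singular versus \v{C}ech homology on $LC^n$-spaces. The only mildly subtle point worth emphasizing is that $(C_z,\partial C_z)$ need not itself be $LC^n$, so $T_{C_z,\partial C_z}$ is not claimed to be an isomorphism; it is merely required to exist as part of a natural transformation, which is enough for the diagram chase to go through.
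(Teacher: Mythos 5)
Your proposal is correct and follows exactly the paper's argument: the paper likewise combines the isomorphism $T_{U,V}$ (valid since $X$, hence $U$ and $V$, is $LC^n$) with the naturality square for the inclusion $(C_z,\partial C_z)\hookrightarrow(U,V)$ and chases a preimage $w$ of $z$ around it. Your added remark that $T_{C_z,\partial C_z}$ need not be an isomorphism is a sensible clarification but not a departure from the paper's proof.
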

\begin{proof}
Since $X$ is $LC^n$, the homomorphism $T_{U,V}: H_k(U,V)\to\check{H}_k(U,V)$ is an isomorphism. Then the commutative diagram

{ $$
\begin{CD}
H_{k}(C_z,\partial C_z)@>{i_*}>>H_{k}(U,V)\\
@VV{T_{C_z,\partial C_z}}V@VV{T_{U,V}}V\\
\check{H}_{k}(C_z,\partial C_z)@>{i_*}>>\check{H}_{k}(U,V)@.
\end{CD}
$$}\\

implies that $(C_z,\partial C_z)$ is a \v{C}ech carrier for $z$.
\end{proof}

Next lemma is an analogue of \cite[Lemma 3.1]{dw}.
\begin{lem}\label{1}
A closed set $A\subset X$ is a homological $Z_n$-set in $X$ if and only if each $z\in H_m(U,V)$, where $m\leq n$ and $U,V$ are open sets in $X$ with $V\subset U$, has a singular carrier $(C_z,\partial C_z)\subset (U,V)$ such that $C_z\cap A=\varnothing$.
\end{lem}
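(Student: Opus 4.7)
The plan is to prove both implications, with the main work in the \emph{only if} direction.

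The \emph{if} direction is immediate: given $z\in H_k(W,W\setminus A)$ with $k\le n$, a singular carrier $(C,\partial C)\subset(W,W\setminus A)$ with $C\cap A=\varnothing$ satisfies $C\subset W\setminus A$, so the inclusion-induced map $H_k(C,\partial C)\to H_k(W,W\setminus A)$ factors through $H_k(W\setminus A,W\setminus A)=0$. Hence $z=0$ and $A$ is a homological $Z_n$-set.

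For the \emph{only if} direction, given $z\in H_m(U,V)$ with $m\le n$, the plan is to show that the inclusion-induced map $H_m(U\setminus A,V\setminus A)\to H_m(U,V)$ is surjective. Any singular cycle representing a preimage of $z$ then has compact support $C\subset U\setminus A$ with boundary supported in $V\setminus A$, producing a singular carrier $(C,\partial C)$ of $z$ disjoint from $A$.

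To obtain this surjectivity, I would introduce the intermediate open set $U'=V\cup(U\setminus A)$ and factor the inclusion as
\[
(U\setminus A,V\setminus A)\hookrightarrow(U',V\setminus A)\hookrightarrow(U',V)\hookrightarrow(U,V).
\]
The key input is the excision isomorphism $H_m(U',U\setminus A)\cong H_m(V,V\setminus A)$, obtained by excising $U'\setminus V=(U\setminus A)\setminus V$ from $U'$; the excision hypothesis holds because $V$ is open in $U'$. Since $A$ is a homological $Z_n$-set, $H_m(V,V\setminus A)=0$ for $m\le n$, so $H_m(U',U\setminus A)=0$ in this range. Combining this with $H_m(U,U\setminus A)=0$ (the $Z_n$-set hypothesis) and inserting the vanishings into the long exact sequences of the triples $(U',U\setminus A,V\setminus A)$, $(U',V,V\setminus A)$, $(U,U',U\setminus A)$, and $(U,U',V)$, one checks that each of the three arrows in the factorisation induces a surjection on $H_m$ for $m\le n$; composing yields the desired surjectivity.

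The main obstacle is not conceptual but bookkeeping: one must verify the excision hypothesis carefully, and track that whenever a connecting homomorphism brings in the group $H_{m-1}$ of some auxiliary pair, the required vanishing still holds in that degree (which it does, since $m-1\le n-1<n$).
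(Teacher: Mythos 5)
Your proof is correct, but it reaches the key point --- surjectivity of the inclusion-induced map $H_m(U\setminus A,V\setminus A)\to H_m(U,V)$ for $m\le n$ --- by a genuinely different route than the paper. The paper compares the long exact sequences of the pairs $(U\setminus A,V\setminus A)$ and $(U,V)$: since $A$ is a homological $Z_n$-set, $H_m(U\setminus A)\to H_m(U)$ is an epimorphism and $H_{m-1}(V\setminus A)\to H_{m-1}(V)$, $H_{m-1}(U\setminus A)\to H_{m-1}(U)$ are isomorphisms, so the Four-Lemma gives the desired epimorphism in one step. You instead factor the inclusion through $(U',V\setminus A)$ and $(U',V)$ with $U'=V\cup(U\setminus A)$ and chase exact sequences of triples; your excision argument and degree bookkeeping are valid (all the needed vanishings occur in degrees $m$ and $m-1$, both $\le n$). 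One remark: the excision step is actually superfluous, because $V\subset U$ gives $U'\setminus A=U\setminus A$, so $H_m(U',U\setminus A)=H_m(U',U'\setminus A)=0$ follows directly from the definition of a homological $Z_n$-set applied to the open set $U'$; this would shorten your argument noticeably. Overall the paper's ladder-plus-Four-Lemma proof is more economical, while yours is more elementary in the sense that it avoids the Four-Lemma at the cost of an auxiliary open set and three exact sequences of triples. Your treatment of the converse direction matches the paper, which dismisses it as obvious.
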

\begin{proof}
Suppose $A$ is a homological $Z_n$-set and let $z\in H_m(U,V)$ for some $m\leq n$ and open sets $V\subset U$.  Then, we have the commutative diagram,
$$
\xymatrix{
&H_m(U\setminus A)\ar[rr]^{}\ar[d]_{i}&&H_m(U\setminus A,V\setminus A)\ar[rr]^{}\ar[d]_{j}&&
H_{m-1}(V\setminus A\ar[rr]^{})\ar[d]_{p}&&H_{m-1}(U\setminus A)\ar[d]_{l}\\
&H_m(U)\ar[rr]^{}&&H_m(U,V)\ar[rr]^{}&&H_{m-1}(V)\ar[rr]^{}&&H_{m-1}(U)\\
%&\widetilde{H}_k(U;\mathbb Z)\otimes G\ar[rr]^{}&&\widetilde{H}_k(U;G)\ar[rr]^{}&&
%\widetilde{H}_{k-1}(U;\mathbb Z)*G\\
}
$$
where the rows are exact sequences of the pairs $(U\setminus A,V\setminus A)$ and $(U,V)$.
Since $A$ is a homological $Z_n$-set, $i$ is an epimorphism, while $p$ and $l$ are isomorphisms. Hence, by the Four-Lemma, $j$ is  an epimorphism. This means that
 there is $z'\in H_m(U\setminus A,V\setminus A)$ with $j(z')=z$. So, $z$ has a singular carrier $(C_z,\partial C_z)\subset (U\setminus A,V\setminus A)$. %Finally, by Lemma \ref{carrier},  $(C_z,\partial C_z)$ is a \v{C}ech carrier for $z$.

The other implication of Lemma \ref{1} is obvious.
\end{proof}

Let $\mathcal B$ be a finitely additive base for $X$ and $\mathcal H_n=\bigcup\{H_k(U,V):k\leq n{~}\mbox{and}{~}U,V\in\mathcal B\}$. Then
$\mathcal H_n=\bigcup\{H_k(U,V):k\leq n{~}\mbox{and}{~}U,V{~}\mbox{open in}{~}X\}$.
%$\mathcal H_n(U,V)=\{z\in H_k(U,V):k\leq n\}$ and for any $z\in\mathcal H_n(U,V)$ let $(C_z,\partial C_z)\subset (U,V)$ denote a carrier  of $z$.
%If $X$ is $LC^{n}$ each group $H_k(U,V)$, $k\leq n$, is countable and so is the family $\mathcal H_n(U,V)$.

\begin{cor}\label{hom}
Every closed subset of $X$ contained in $X\setminus\bigcup\{C_z:z\in\mathcal H_n\}$ is a homological $Z_{n}$-set in $X$.
\end{cor}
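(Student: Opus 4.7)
The plan is to reduce the corollary to Lemma~\ref{1}. Let $A\subset X$ be a closed set with $A\cap C_z=\varnothing$ for every $z\in\mathcal H_n$. By Lemma~\ref{1}, showing that $A$ is a homological $Z_n$-set amounts to producing, for each $m\le n$, each pair $V\subset U$ of open sets in $X$, and each class $z\in H_m(U,V)$, a singular carrier of $z$ disjoint from $A$. Since the chosen carriers $C_z$ are only indexed by classes whose ambient pair lies in the base $\mathcal B$, the core of the argument is to transfer a carrier across a change of pair from a basic pair to an arbitrary open pair.

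Given such a triple $(U,V,z)$, I would choose a representative singular chain $c$ for $z$, with $\partial c$ supported in $V$. The supports $|c|\subset U$ and $|\partial c|\subset V$ are compact, so each is covered by finitely many basic open sets contained in $U$ and in the open set $V\cap U'$, respectively (after the first step below). Using finite additivity of $\mathcal B$, first pick $U'\in\mathcal B$ with $|c|\subset U'\subset U$; then pick $V'\in\mathcal B$ with $|\partial c|\subset V'\subset V\cap U'$. The chain $c$ now represents a class $z'\in H_m(U',V')$, and the inclusion $(U',V')\hookrightarrow(U,V)$ sends $z'$ to $z$.

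Since $U',V'\in\mathcal B$, we have $z'\in\mathcal H_n$, so the chosen singular carrier $(C_{z'},\partial C_{z'})\subset(U',V')$ is disjoint from $A$. The inclusion $(C_{z'},\partial C_{z'})\hookrightarrow(U,V)$ factors through $(U',V')$, so any $w\in H_m(C_{z'},\partial C_{z'})$ with $i_*(w)=z'$ also maps to $z$ under $(C_{z'},\partial C_{z'})\hookrightarrow(U,V)$. Hence $(C_{z'},\partial C_{z'})$ is a singular carrier of $z$ in $(U,V)$ that avoids $A$, and Lemma~\ref{1} completes the proof.

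I do not anticipate any serious obstacle: the whole purpose of formulating the statement over a finitely additive base is precisely to enable this ``shrink to a basic pair'' device. The only point requiring a moment of care is the order of the two choices above, namely that $V'$ be placed simultaneously inside $V$ and inside $U'$; this is exactly where finite additivity of $\mathcal B$ is used, and it is also the content of the observation in the excerpt that $\mathcal H_n=\bigcup\{H_k(U,V):k\le n,\,U,V\text{ open in }X\}$.
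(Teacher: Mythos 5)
Your argument is correct and is exactly the reasoning the paper leaves implicit: the corollary is stated without proof, as an immediate consequence of the observation that $\mathcal H_n$ computed over basic pairs already exhausts all classes $H_k(U,V)$ with $U,V$ open (via shrinking the compact supports of a representing chain into members of the finitely additive base) together with the easy direction of Lemma~\ref{1}. Your write-up simply makes that two-step reduction explicit, so it matches the paper's approach.
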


%Following the arguments of \cite[Section 3]{dw} and using Lemma 2.1 instead of Lemma 3.1 from \cite{dw}, one can establish next proposition.
\begin{pro}\label{main}
 Consider the following conditions for an $LC^n$-space:
\begin{itemize}
\item[(1)] $C(\mathbb B^n,X)$ contains a dense set of homological $Z_n$-maps;
\item[(2)] Each $C(\mathbb B^k,X)$, $k\leq n$, contains a dense set of homological $Z_n$-maps;
\item[(3)] Every $z\in H_k(U,V)$, $k\leq n$, has a homological \v{C}ech $Z_n$-carrier $(C_z,\partial C_z)\subset (U,V)$.
\item[(4)] $X\in\rm{DC^nP}$.%If $\{A_i\}$ is a sequence of homological $Z_n$-sets, then every $z\in\mathcal H_n(U,V)$  has a carrier
\end{itemize}
Then $(1)\Rightarrow (2) \Rightarrow (3)\Rightarrow  (4)$.

%$(C_z,\partial C_z)\subset (U,V)$ with $C_z\subset X\setminus\bigcup A_i$;
\end{pro}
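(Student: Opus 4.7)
The plan is to carry out the three implications in order, with $(2)\Rightarrow(3)$ being the technical core and the only one requiring any real work.

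For $(1)\Rightarrow(2)$, I would use a simple retraction trick. Given $k\leq n$ and $f\in C(\mathbb B^k,X)$, pick a retraction $r\colon\mathbb B^n\to\mathbb B^k$ and apply (1) to approximate $\widehat f=f\circ r$ by a homological $Z_n$-map $g\colon\mathbb B^n\to X$ in the limitation topology. The restriction $g|_{\mathbb B^k}$ then approximates $f$, and its image is a closed subset of the homological $Z_n$-set $g(\mathbb B^n)$; the hereditary property of homological $Z_n$-sets recalled in the preliminaries shows $g(\mathbb B^k)$ is itself a homological $Z_n$-set.

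For $(2)\Rightarrow(3)$, fix $z\in H_k(U,V)$ with $k\leq n$ and represent it by a singular relative cycle $c=\sum_i a_i\sigma_i$ with $\sigma_i\colon\Delta^k\to U$ and $\partial c$ supported in $V$. Using (2) on each $\sigma_i$, I would choose a homological $Z_n$-map $\tau_i\colon\Delta^k\to U$ very close to $\sigma_i$ and form $c'=\sum_i a_i\tau_i$. If the approximations are sufficiently close, the $LC^n$-property supplies simplex-wise prism homotopies $\sigma_i\simeq\tau_i$ in $U$, which I would assemble into a $(k+1)$-chain witnessing $c\sim c'$ in the pair $(U,V)$. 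Then $c'$ also represents $z$, so the compact pair $\bigl(\bigcup_i\tau_i(\Delta^k),\bigcup_i\tau_i(\partial\Delta^k)\bigr)$ is a singular carrier of $z$ in $(U,V)$; by Lemma~\ref{carrier} it is a \v{C}ech carrier as well, and its first component is a finite union of homological $Z_n$-sets, hence itself a homological $Z_n$-set by the finite-additivity property.

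The delicate point in the previous step---and the main obstacle---is that the faces of the individual $\sigma_i$ need not lie in $V$: only the total boundary $\partial c$ does, via algebraic cancellations between paired faces. Independent perturbations of the $\sigma_i$ can break these cancellations, so the approximations must be chosen coherently. I would handle this by approximating first on the $(k-1)$-skeleton of the underlying $\Delta$-structure of $c$ and then extending to each top cell using (2), so that the cancellations either persist or leave only a small error that is itself null-homologous in $V$ via the $LC^n$-modulus; together with the prism homotopies this yields the required chain-level relation $c\sim c'$ in $(U,V)$.

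For $(3)\Rightarrow(4)$, take $z_i\in\check{H}_{q_i}(U_i,V_i)$, $i=1,2$, with $0\leq q_i\leq n$. Since $X$ is $LC^n$, the natural homomorphism $T_{U_i,V_i}$ is an isomorphism in these degrees, and I identify each $z_i$ with a singular class $\widetilde z_i\in H_{q_i}(U_i,V_i)$. Applying (3) to $\widetilde z_1$ gives a \v{C}ech carrier $(C_1,\partial C_1)\subset(U_1,V_1)$ whose support $C_1$ is a homological $Z_n$-set in $X$. Lemma~\ref{1}, applied to the homological $Z_n$-set $C_1$ and the class $\widetilde z_2$, produces a singular carrier $(C_2,\partial C_2)$ of $\widetilde z_2$ inside $(U_2\setminus C_1,V_2\setminus C_1)\subset(U_2,V_2)$, which Lemma~\ref{carrier} promotes to a \v{C}ech carrier of $z_2$. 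Disjointness $C_1\cap C_2=\varnothing$ is built into the construction, so $X\in\rm{DC^nP}$.
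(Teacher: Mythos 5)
Your $(1)\Rightarrow(2)$ and $(3)\Rightarrow(4)$ are essentially the paper's own arguments: the retraction trick (the paper phrases it via the open surjective restriction map $\pi^n_k$) together with heredity of homological $Z_n$-sets under closed subsets, and the combination of Lemma~\ref{1} with Lemma~\ref{carrier} (your explicit use of the comparison homomorphism $T_{U_i,V_i}$ to pass between \v{C}ech and singular classes is fine and only makes explicit what the paper leaves implicit). The divergence is in $(2)\Rightarrow(3)$, and there your argument has a genuine gap.

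You correctly identify the delicate point: in a relative cycle the individual simplices need not map their boundary spheres into $V$, and independent perturbations destroy the algebraic cancellations. (The paper dispatches this by asserting outright that $z$ has a representative $\sum m_if_i$ with every $f_i(\mathbb S^{k-1})\subset V$ and then approximating each $f_i$ separately; such representatives need not exist for a general $LC^n$ pair --- for instance the generator of $H_2(T^2,D)$, with $D$ an open disk, admits none, since a simplex with boundary in $D$ carries a spherical class and $\pi_2(T^2)=0$ --- so this point genuinely requires an argument rather than a remark.) Your proposed repair, however, does not close the gap. Condition (2) is an absolute density statement in $C(\mathbb B^k,X)$ and provides no relative approximation, so you cannot ``extend to each top cell using (2)'' a map already prescribed on the $(k-1)$-skeleton and have the extension be a homological $Z_n$-map: approximating to gain the homological $Z_n$-property moves the boundary values, while correcting the boundary values destroys membership in the dense set. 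If instead the top-cell approximations $\tau_i$ are chosen freely, two formerly cancelling faces get approximated by two distinct nearby maps, the cancellation fails, and the residual terms of $\partial\bigl(\sum_i m_i\tau_i\bigr)$ lie only in $U$ (the cancelling faces need not be anywhere near $V$); hence $\sum_i m_i\tau_i$ is not even a relative cycle of $(U,V)$, the prism construction yields no relation $c\sim c'$ modulo chains in $V$, and the appeal to an ``error null-homologous in $V$ via the $LC^n$-modulus'' has nothing to act on. To complete the step you would need either a proof (under hypothesis (2)) that representatives with simplex-wise boundaries in $V$ can be arranged, or a relative/polyhedral strengthening of (2) --- e.g.\ density of homological $Z_n$-maps in $C(K,X)$ for compact $k$-dimensional polyhedra $K$, as in Banakh--Repov\v{s}, which lets one approximate a single map of a polyhedral pair realizing $z$ as a whole; neither is supplied in your sketch.
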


\begin{proof}
%Suppose $X$ satisfies condition $(1)$.
$(1)\Rightarrow (2)$:
Suppose $C(\mathbb B^n,X)$ contains a dense set of homological $Z_n$-maps.
For every $k<n$ we embed $\mathbb B^k$ in $\mathbb B^n$ and consider the restriction map $\pi_k^n:C(\mathbb B^n,X)\to C(\mathbb B^k,X)$. The maps
$\pi_k^n$ are open and continuous. Moreover, since $\mathbb B^k$ is a retract of $\mathbb B^n$, $\pi_k^n$ are also surjective. Therefore, if
$M_n\subset C(\mathbb B^n,X)$ is a dense subset consisting of homological $Z_n$-maps, the sets $M_k=\pi_k^n(M_n)$ are also dense in $C(\mathbb B^k,X)$ and consist of homological $Z_n$-maps.

 $(2) \Rightarrow (3)$: If $z\in H_k(U,V)$ for some open sets $U,V$ then there is a singular chain $c_z=\sum_{i=1}^{p}m_if_i$ representing $z$ with $f_i(\mathbb S^{k-1})\subset V$ for all $i$. Using the density of $M_k$ in  $C(\mathbb B^k,X)$ we approximate each $f_i$ with a map $g_i\in M_k$ such that $g_i(\mathbb B^k)\subset U$ and $g_i(\mathbb S^{k-1})\subset V$. Because $X\in LC^n$, we can suppose that each $g_i$ is homotopic to $f_i$ in $U$. This means that $\sum_{i=1}^{p}m_ig_i$ is another representation of $z$ and
$C_z=\bigcup_{i=1}^p g_i(\mathbb B^k)$ is a singular carrier for $z$. Because each $g_i(\mathbb B^k)$ is a homological $Z_n$-set in $X$, so is $C_z$, see \cite{bck}. It remains to show that $C_z$ is also a \v{C}ech carrier for $z$. And this follows from Lemma \ref{carrier}.

$(3)\Rightarrow  (4)$: Let $z_j\in H_{k(j)}(U_j,V_j)$, $j=1,2$, and $(C_1,\partial C_1)\subset (U_1,V_1)$ be a \v{C}ech carrier for $z_1$ such that $C_1$ is a homological $Z_n$-set. Then, by Lemma \ref{1}, $z_2$ has a singular carrier $(C_2,\partial C_2)\subset (U_2,V_2)$ with $C_1\cap C_2=\varnothing$. Lemma \ref{carrier} implies that $(C_2,\partial C_2)$ is also a
\v{C}ech carrier for $z_2$.
\end{proof}

\begin{lem}\label{G-delta}
Let $X$ be an $LC^n$ space. Then the set $\Lambda _n$ of all homological $Z_n$-maps $f\colon\mathbb B^n \to X$ is $G_{\delta}$ in $C(\mathbb B^n,X)$.
\end{lem}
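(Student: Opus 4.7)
The plan is to realize $\Lambda_n$ as a countable intersection of open subsets of $C(\mathbb{B}^n, X)$. I would fix a countable finitely additive base $\mathcal{B}$ of $X$ and consider the collection $\mathcal{H}_n$ defined just before Corollary \ref{hom}. The countability of $\mathcal{H}_n$ follows from the lemma stated immediately before Lemma \ref{carrier}, which asserts that each group $H_k(U,V)$ is countable for $U, V \in \mathcal{B}$ and $k \leq n$.

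For each $z \in \mathcal{H}_n$, say $z \in H_k(U,V)$, I would set
\[
\Lambda_n(z) = \{f \in C(\mathbb{B}^n, X) : \text{some singular carrier } (C_z, \partial C_z) \subset (U,V) \text{ of } z \text{ is disjoint from } f(\mathbb{B}^n)\},
\]
and then establish $\Lambda_n = \bigcap_{z \in \mathcal{H}_n} \Lambda_n(z)$ together with the openness of each $\Lambda_n(z)$. The inclusion $\Lambda_n \subseteq \bigcap \Lambda_n(z)$ is immediate from Lemma \ref{1}. For the reverse inclusion, any $z' \in H_m(U', V')$ with $m \leq n$ and arbitrary open $U', V'$ is represented by a finite singular chain whose compact support lies inside a base pair $(U,V) \subset (U',V')$, so $z'$ is induced from some $z \in \mathcal{H}_n$; a carrier of $z$ that misses $f(\mathbb{B}^n)$ is also a carrier of $z'$, and Lemma \ref{1} then yields $f \in \Lambda_n$. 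For openness of $\Lambda_n(z)$: since $f(\mathbb{B}^n)$ and $C_z$ are disjoint compact sets in $X$, a sufficiently fine open cover $\mathcal{U}$ of $X$ (whose elements separate these sets) determines a limitation neighborhood of $f$ in which every $g$ satisfies $g(\mathbb{B}^n) \cap C_z = \varnothing$, the same $C_z$ witnessing membership in $\Lambda_n(z)$.

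The main obstacle is the reverse inclusion, i.e., reducing the \emph{for all open $U, V$} clause in Lemma \ref{1} to \emph{for $U, V$ in the countable base} $\mathcal{B}$; this is essentially the content of the identity $\mathcal{H}_n = \bigcup\{H_k(U,V) : U, V \text{ open in } X\}$ noted in the paper, and it is what converts the a priori uncountable family of conditions into a countable one. Once this reduction is in place, the $G_\delta$ property follows at once from the compactness-based openness of the sets $\Lambda_n(z)$.
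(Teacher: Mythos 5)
Your proof is correct, but it takes a genuinely different route from the paper's. The paper does not index its open sets by homology classes: it fixes countable dense families $\mathcal C_k$ of singular $k$-chains ($k\leq n$) and, for each base element $U$, each chain $c\subset U$ and each $m\in\mathbb N$, considers the set $G^m_{U,c}$ of maps $f$ such that whenever $\partial c$ misses the closed $1/m$-neighbourhood of $f(\mathbb B^n)$ there is a chain $c'\subset U$ with $\partial c'-c\subset U\setminus f(\mathbb B^n)$; it then proves $\Lambda_n=\bigcap G^m_{U,c}$ by reducing, via \cite{bck}, the homological $Z_n$-property of $f(\mathbb B^n)$ to the vanishing of $H_k(U,U\setminus f(\mathbb B^n))$ for $U$ in a countable base, the $1/m$-buffer being exactly what keeps the defining implication stable under small perturbations of $f$, hence each $G^m_{U,c}$ open. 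You instead enumerate the classes $z\in\mathcal H_n$ themselves, which is where the countability lemma (and thus the $LC^n$ hypothesis) enters, and declare $f$ admissible for $z$ when some singular carrier of $z$ misses $f(\mathbb B^n)$; openness is then immediate since the fixed compact carrier remains disjoint from $g(\mathbb B^n)$ for all $g$ uniformly (equivalently, limitation-) close to $f$, and the equality $\Lambda_n=\bigcap_z\Lambda_n(z)$ follows from Lemma \ref{1} in one direction and, in the other, from the compact-support reduction of a class in $H_m(U',V')$ to a class over a base pair (choose $U\in\mathcal B$ around the support of a representing chain inside $U'$ first, then $V\in\mathcal B$ around the support of its boundary inside $U\cap V'$, using finite additivity of $\mathcal B$), which is precisely the identity recorded before Corollary \ref{hom}. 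The trade-off: your indexing buys a cleaner openness argument and reuses Lemma \ref{1} and the countability of $H_k(U,V)$ already available in the paper, while the paper's chain-level formulation does not need the groups themselves to be countable (a countable dense set of chains suffices) and meshes with the chain manipulations used elsewhere in Section 2, at the cost of the quantitative $1/m$ device.
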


\begin{proof}
Let $\mathcal B$ be a countable finitely additive base for $X$.  Let $\mathcal C_k$ be a countable dense set of $k$-chains in $X$, where  $k=0,1,2,\dots, n$ (density here is with respect to the compact-open topology for respective maps), and $\mathcal C=\bigcup_{k=0}^n\mathcal C_k$. For a set $A\subset X$ and $\varepsilon >0$  let $B (A,\epsilon)$ denote the closed $\varepsilon$-neighbourhood of $A$ in $X$.
 For any  $U\in\mathcal B$, $c\in\mathcal C$ with $c\subset U$, and $m=1,2, \dots$ let $G^m_{U, c}$ be the set of all maps $f\colon\mathbb B^n \to X$ such that if $\partial c \subset U\setminus B(f(\mathbb B^n),1/m)$ then there exists a $(k+1)$-chain $c'\subset U$ such that $\partial c' - c\subset U\setminus f(\mathbb B^n)$.

First we will show that $\Lambda _n = \cap \{ G^m_{U,c} \mid  U\in \mathcal B, c\in\mathcal C, m\in\mathbb N\}$. Denote the latter intersection by $\mathcal G$ and suppose that $f\in \mathcal G$. According to the results of \cite{bck} it is sufficient to show that $H_k (U, U\setminus f(\mathbb B^n))=0$ for all $k=0,1,2,\dots, n$ and all $U\in \mathcal B$. Consider a chain $c\in  H_k (U, U\setminus f(\mathbb B^n))$. We may assume that $c\in \mathcal C_k$.  Because the carrier of $c$ and $f(\mathbb B^n)$ are compact, there exists $m$ such that $\partial c \subset U\setminus B(f(\mathbb B^n),1/m))$. Since $f\in G^m_{U, c}$ there exists a $(k+1)$-chain $c'\subset U$ such that $\partial c' - c\subset U\setminus f(\mathbb B^n)$. This implies that $c$ is homologous to $0$ in $H_k (U, U\setminus f(\mathbb B^n)$. Thus $f\in \Lambda _n$. Now consider any $f\in \Lambda _n$ and a set $G^m_{U, c}$ with $c\in\mathcal C_k$. Suppose $\partial c \subset U\setminus B(f(\mathbb B^n),1/m)$. Then, in particular, $c\in H_k (U, U\setminus f(\mathbb B^n)) =0$. Therefore there exists  $(k+1)$-chain $c'\subset U$ such that $\partial c' - c\subset U\setminus f(\mathbb B^n)$. This implies that $f$ is contained in each set $G^m_{U, c}$, and hence $f\in \mathcal G$.

 Next, we will show that each $G^m_{\mathcal U,c}$ is open. For this, consider $f\in G^m_{\mathcal U,c}$. If $\partial c \not\subset U\setminus B(f(\mathbb B^n),1/m)$, the same is true for all maps $g$ that are sufficiently close to $f$. Suppose that $\partial c \subset U\setminus B(f(\mathbb B^n),1/m)$.
  Then there exists a $(k+1)$-chain $c'$ such that $\partial c' - c\subset U\setminus f(\mathbb B^n)$. This implies that $\partial c' - c\subset U\setminus g(\mathbb B^n)$ for all maps $g$ sufficiently close to $f$.
\end{proof}

We say that a space $X$ has the {\em property $\displaystyle\rm{DD^{\{n,m\}}P}$} if every two maps $f:\mathbb B^n\to X$ and $g:\mathbb B^m\to X$ can be approximated by maps $f':\mathbb B^n\to X$ and $g':\mathbb B^m\to X$ with $f'(\mathbb B^n)\cap g'(\mathbb B^m)=\varnothing$.
The property $\displaystyle\rm{DD^{\{1,2\}}P}$ (resp., $\displaystyle\rm{DD^{\{1,1\}}P}$)
is called the disjoint arc-disk (resp., disjoint arcs) property.
\begin{pro}\label{dadp}
Let $C(\mathbb B^n,X)$ contains a dense set of homological $Z_1$-maps. Then
\begin{itemize}
\item[(1)] $\displaystyle X\in\rm{DD^{\{1,n\}}P}$;
\item[(2)] $C(\mathbb B^1,X)$ contains a dense $G_\delta$-subset consisting of $Z_n$-maps;
\item[(3)] $C(\mathbb B^n,X)$ contains a dense $G_\delta$-subset consisting of $Z_1$-maps.
\end{itemize}
\end{pro}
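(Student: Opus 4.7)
The plan is to first propagate the hypothesis from $\mathbb B^n$ down to $\mathbb B^1$, then prove a key lemma saying that in an $LC^1$-space every homological $Z_1$-set is automatically a $Z_1$-set, and finally deduce (1), (2), (3) from this lemma. Exactly as in the proof of $(1)\Rightarrow (2)$ of Proposition \ref{main}, the restriction map $\pi_1^n\colon C(\mathbb B^n,X)\to C(\mathbb B^1,X)$ is a continuous open surjection (because $\mathbb B^1$ is a retract of $\mathbb B^n$), and the image under $\pi_1^n$ of a homological $Z_1$-map is again a homological $Z_1$-map (the image is a closed subset of the original image, and the homological $Z_1$-property is hereditary with respect to closed subsets). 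Hence $C(\mathbb B^1,X)$ also contains a dense set of homological $Z_1$-maps. Throughout I shall use that $X$ is $LC^n$, in particular $LC^1$, in line with the standing convention of Section 2.

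The heart of the argument is the claim that \emph{every closed homological $Z_1$-set $A\subset X$ is a $Z_1$-set}. Given $f\in C(\mathbb B^1,X)$ and $\varepsilon>0$, using local path-connectedness of $X$ I cover $f(\mathbb B^1)$ by path-connected open sets $U_1,\dots,U_N$ of diameter less than $\varepsilon$ and subdivide $[0,1]$ as $0=t_0<t_1<\dots<t_N=1$ with $f([t_{i-1},t_i])\subset U_i$. Since homological $Z_1$-sets are nowhere dense, I can pick $p_i\in(U_i\cap U_{i+1})\setminus A$ close to $f(t_i)$ for $1\leq i\leq N-1$, and similarly endpoints $p_0\in U_1\setminus A$, $p_N\in U_N\setminus A$. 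To connect $p_{i-1}$ to $p_i$ inside $U_i\setminus A$, I invoke the long exact sequence of the pair $(U_i,U_i\setminus A)$: the vanishing $H_0(U_i,U_i\setminus A)=H_1(U_i,U_i\setminus A)=0$ that defines a homological $Z_1$-set, together with $U_i$ being path-connected, forces $H_0(U_i\setminus A)\cong H_0(U_i)\cong\mathbb Z$, so $U_i\setminus A$ is path-connected. Concatenating the resulting arcs yields $f'\in C(\mathbb B^1,X\setminus A)$ within $2\varepsilon$ of $f$, proving the claim.

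Granting the claim, part (1) is immediate: given $f\in C(\mathbb B^1,X)$, $g\in C(\mathbb B^n,X)$, approximate $g$ by a homological $Z_1$-map $g'$; then $A=g'(\mathbb B^n)$ is a $Z_1$-set, and the definition of $Z_1$-set provides an approximation $f'\in C(\mathbb B^1,X\setminus A)$ of $f$. Hence $X\in\mathrm{DD}^{\{1,n\}}\mathrm{P}$. For part (2), I fix a countable dense $\{g_j\}\subset C(\mathbb B^n,X)$ (the target spaces are complete metric and the balls are compact, so the limitation topology on $C(\mathbb B^n,X)$ is just the sup metric) and rationals $\varepsilon_k\to 0$, and set
\[
W_{j,k}=\bigl\{h\in C(\mathbb B^1,X)\colon\exists\,g'\in C(\mathbb B^n,X),\ d(g_j,g')<\varepsilon_k,\ g'(\mathbb B^n)\cap h(\mathbb B^1)=\varnothing\bigr\}.
\]
Each $W_{j,k}$ is open (compact disjoint images stay disjoint under small perturbation of $h$) and dense (by part (1) applied to the pair $(h,g_j)$), so $\bigcap_{j,k}W_{j,k}$ is a dense $G_\delta$ in $C(\mathbb B^1,X)$ consisting precisely of $Z_n$-maps. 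Part (3) is the symmetric Baire argument, with the roles of $\mathbb B^1$ and $\mathbb B^n$ exchanged.

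The main obstacle is the key claim that a homological $Z_1$-set in an $LC^1$-space is literally a $Z_1$-set: the hypothesis controls only relative singular homology, whereas the conclusion is a statement about maps of $\mathbb B^1$. The bridge is the long exact sequence observation turning the vanishing of $H_0$ and $H_1$ of the pair into path-connectedness of $U_i\setminus A$; note that Proposition \ref{z} is not directly applicable here since it requires $n\geq 2$ and the auxiliary $Z_2$-condition, so the $n=1$ case really has to be handled by the direct piecewise construction above. Once this key lemma is established, parts (2) and (3) are essentially formal.
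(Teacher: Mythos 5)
Your overall architecture coincides with the paper's: the whole proposition rests on the single fact that any map of $\mathbb B^1$ can be pushed off a homological $Z_1$-set (the paper simply quotes this from the proof of Proposition~6 of \cite{br}), item (1) follows at once, and items (2) and (3) are exactly the same Baire-category argument you run (the paper uses a countable base of $C(\mathbb B^n,X)$ where you use a countable dense family together with $\varepsilon_k$-balls, which is the same thing). Your sets $W_{j,k}$ are open and dense for the reasons you give, the intersection consists of $Z_n$-maps, and the symmetric argument gives (3); the preliminary reduction via $\pi_1^n$ is harmless but not actually needed.

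The one substantive difference is that you prove the key avoidance lemma instead of citing it, and as written your proof uses more than the proposition assumes: you cover $f(\mathbb B^1)$ by \emph{path-connected} open sets of small diameter, which requires $X$ to be locally path-connected (i.e.\ at least $LC^0$/$LC^1$). There is no standing $LC^n$ convention in Section~2 --- the other statements there carry the $LC^n$ hypothesis explicitly, and Proposition~2.7 deliberately does not, in keeping with the fact that the quoted statement from \cite{br} holds for arbitrary spaces. So, as it stands, you have proved a formally weaker statement (sufficient for every application in the paper, but not the statement itself). The gap is easy to close without any local connectivity: subdivide so that each $f([t_{i-1},t_i])$ has small diameter and let $U_i$ be a small neighbourhood of it; choose the perturbed endpoint $p_i$ not by nowhere density alone but by using $H_0\bigl(B,B\setminus A\bigr)=0$ for a small ball $B$ around $f(t_i)$, which guarantees a point $p_i\in B\setminus A$ lying in the \emph{same path component of $B$} as $f(t_i)$; then the concatenation of the path in $B$ from $p_{i-1}$ to $f(t_{i-1})$, the arc $f|[t_{i-1},t_i]$, and the path in $B'$ from $f(t_i)$ to $p_i$ is a relative $1$-cycle of $(U_i,U_i\setminus A)$, and $H_1(U_i,U_i\setminus A)=0$ yields a singular $1$-chain in $U_i\setminus A$ with boundary $p_i-p_{i-1}$, which forces $p_{i-1}$ and $p_i$ into one path component of $U_i\setminus A$. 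With that modification your lemma, and hence the whole proposition, holds in the stated generality; your homological step ($H_0(U_i\setminus A)\cong H_0(U_i)$ via the exact sequence of the pair) is correct as far as it goes, it is only the reliance on path-connected open covers that imports the unstated $LC^1$ hypothesis.
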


\begin{proof}
The following statement was actually established in the proof of \cite[Proposition 6]{br}: If $A$ is a homological $Z_1$-set in a space $X$, then
every map $g:\mathbb B^1\to X$ can be approximated by maps $g':\mathbb B^1\to X\setminus A$. This statement implies that $\displaystyle X\in\rm{DD^{\{1,n\}}P}$
provided $C(\mathbb B^n, X)$ contains a dense set of homological $Z_1$-maps.

To prove the second item, choose a countable base $\{U_i\}$ for  $C(\mathbb B^n,X)$. For every $i$ let $G_i$ be the set of all $f\in C(\mathbb B^1,X)$ such that $f(\mathbb B^1)\cap g(\mathbb B^n)=\varnothing$ for some
$g\in U_i$. Since $\displaystyle X\in\rm{DD^{\{1,n\}}P}$, one can show that each $G_i$ is open and dense in $C(\mathbb B^1,X)$. So, $G=\bigcap G_i$ is dense and $G_\delta$ in $C(\mathbb B^1,X)$. Observe that for every $f\in G$ and every $i$ there is $g_i\in U_i$ with $f(\mathbb B^1)\cap g_i(\mathbb B^n)=\varnothing$. Since $\{g_i\}$ is a dense set in $C(\mathbb B^n,X)$, each $f(\mathbb B^1)$, $f\in G$, is a $Z_n$-set in $X$. The proof of item $(3)$ is similar.
\end{proof}

%Note that item $(3)$ from Proposition \ref{main} implies that each point of $x$ is a homological $Z_n$-set.
\begin{pro}\label{dd^np}
Let $X$ be a locally compact $LC^n$-space with $n\geq 2$. Then the following are equivalent:
\begin{itemize}
\item[(1)] $C(\mathbb B^n,X)$ contains a dense set of homological $Z_n$-maps;
\item[(2)] $X\in\rm{DC^nP}$ and $C(\mathbb B^2,X)$ contains a dense set of homological $Z_n$-maps;
\item[(3)] Every $C(\mathbb B^k,X)$, $k\leq n$, contains a dense $G_\delta$-set of homological $Z_n$-maps.
\end{itemize} %such that $C(\mathbb B^n,X)$ contains a dense set of homological $Z_n$-maps.
%Then $C(\mathbb B^k,X)$ contains a dense $G_\delta$-set of homological $Z_n$-maps for every $k\leq n$.
%If, in addition, $X\in\rm{DD^2P}$, then $X\in\rm{DD^nP}$.
\end{pro}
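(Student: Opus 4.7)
The plan is to prove the cycle $(3) \Rightarrow (1) \Rightarrow (2) \Rightarrow (3)$, with the substantive work concentrated in $(2) \Rightarrow (3)$. The implication $(3) \Rightarrow (1)$ is immediate: taking $k = n$ in (3) gives a dense $G_\delta$-set (hence a dense set) of homological $Z_n$-maps in $C(\mathbb B^n, X)$. For $(1) \Rightarrow (2)$, I would recycle the restriction argument from the proof of Proposition~\ref{main}$(1)\Rightarrow(2)$: since $\mathbb B^2$ is a retract of $\mathbb B^n$, the restriction map $C(\mathbb B^n, X) \to C(\mathbb B^2, X)$ is open, continuous and surjective, so density transfers to $C(\mathbb B^2, X)$, while $X \in \rm{DC^nP}$ then follows from Proposition~\ref{main}$(1)\Rightarrow(4)$.

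For $(2) \Rightarrow (3)$, Lemma~\ref{G-delta} provides the $G_\delta$ part for free, so it suffices to prove density of homological $Z_n$-maps in each $C(\mathbb B^k, X)$ with $k \leq n$. For $k \leq 2$ this again follows from hypothesis~(2) and the restriction argument above. The core task is to bootstrap density from $\mathbb B^2$ up to $\mathbb B^k$ for $3 \leq k \leq n$, and for this I would use $\rm{DC^nP}$ as follows.

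The key preliminary observation is that the argument of Proposition~\ref{main}$(2)\Rightarrow(3)$, applied only to singular chains of dimension at most $2$, already shows under hypothesis~(2) that every $z \in H_m(U, V)$ with $m \leq 2$ admits a homological $Z_n$-carrier. Given $f \colon \mathbb B^k \to X$ and $\varepsilon > 0$ with $3 \leq k \leq n$, I would enumerate, using the countability of $H_m(U, V)$ for pairs $U, V$ in a countable base of $X$, all triples $(z_i, U_i, V_i)$ with $m(i) \leq n$, and inductively construct approximations $g_i$ to $f$ together with \v{C}ech carriers $C_i \subset (U_i, V_i)$ for $z_i$ such that $g_i(\mathbb B^k) \cap C_i = \varnothing$. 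At step $i$, the carrier $C_i$ is selected via $\rm{DC^nP}$ to be disjoint from the previously produced low-degree homological $Z_n$-carriers, and the adjustment $g_{i-1} \mapsto g_i$ is produced using Lemma~\ref{1}. Passing to the uniform limit $g$, Lemma~\ref{1} then identifies $g(\mathbb B^k)$ as a homological $Z_n$-set.

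The main obstacle will be controlling the inductive construction. Each update $g_{i-1} \mapsto g_i$ must be a geometrically small modification (summable to less than $\varepsilon$) while simultaneously preserving disjointness from all previously produced carriers and introducing disjointness from $C_i$. This delicate balance hinges on the pairwise disjointness built into $\rm{DC^nP}$, combined with the local homotopy-theoretic flexibility afforded by the $LC^n$-hypothesis and the availability of homological $Z_n$-carriers for every low-dimensional homology class.
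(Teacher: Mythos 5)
Your reductions are fine and agree with the paper: $(3)\Rightarrow(1)$ is trivial, $(1)\Rightarrow(2)$ follows from Proposition \ref{main} via the restriction map, the $G_\delta$ part comes from (the $\mathbb B^k$-version of) Lemma \ref{G-delta}, and the cases $k\leq 2$ follow from hypothesis $(2)$ by restriction. The gap is in the inductive step of your bootstrap for $3\leq k\leq n$: you have no mechanism to achieve $g_i(\mathbb B^k)\cap C_i=\varnothing$. You cannot choose the carrier $C_i$ to avoid $g_{i-1}(\mathbb B^k)$, since Lemma \ref{1} would require $g_{i-1}(\mathbb B^k)$ to be a homological $Z_n$-set --- which is exactly what the construction is supposed to produce in the limit, so this is circular; and $\rm{DC^nP}$ only makes carriers of two homology classes disjoint from \emph{each other}, not from a prescribed compactum. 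Conversely, you cannot move $g_{i-1}$ off $C_i$: for degrees $3\leq m\leq n$ the carriers produced by $\rm{DC^nP}$ are just compacta (at best homological $Z_n$-sets), not known to be $Z_k$-sets, and approximating maps of $\mathbb B^k$, $k\geq 3$, by maps missing a given homological $Z_n$-set amounts to claiming such sets are $Z_k$-sets, which is false in general --- that is precisely why Proposition \ref{z} demands the additional $Z_2$ (or $LCC^1$) property. Your closing paragraph identifies the obstacle as bookkeeping of small moves, but the single step itself has no justification; note also that your argument never uses local compactness, which the statement requires.

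The paper's proof of $(2)\Rightarrow(3)$ supplies exactly the missing ingredients. Using $\rm{DC^nP}$ and the arguments of \cite[Lemma 3.2]{dw}, it first produces homological \v{C}ech $Z_n$-carriers $C_j$ for all classes in a countable cofinal family $\mathcal H_n$, and enlarges them to $D_j=C_j\cup g_j(\mathbb B^2)$, where $\{g_j\}$ is a dense sequence of homological $Z_n$-maps of $\mathbb B^2$. Then every compactum in $X\setminus\bigcup D_j$ is a homological $Z_n$-set (Corollary \ref{hom}) and a $Z_2$-set (density of $\{g_j\}$), hence a $Z_n$-set by Proposition \ref{z}. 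The decisive step --- absent from your plan --- is the relocation argument of \cite[Lemma 3.8]{dw}: it replaces the unavoidable sets $D_j$ by compacta $A_i\subset X\setminus\bigcup D_j$, which are therefore genuine $Z_n$-sets and hence avoidable by maps of $\mathbb B^k$, while every compactum missing $\bigcup A_i$ remains a homological $Z_n$-set. The maps $g\in C(\mathbb B^k,X)$ with $g(\mathbb B^k)\cap\bigcup A_i=\varnothing$ then form the desired dense $G_\delta$-set of homological $Z_n$-maps. In short, the role of the dense $2$-disk maps in hypothesis $(2)$ is to make the complement of $\bigcup D_j$ consist of $Z_2$-sets so that Proposition \ref{z} applies, and the role of the Daverman--Walsh relocation is to convert homologically small but unavoidable carriers into avoidable $Z_n$-sets; without these two ideas the induction you propose cannot get off the ground.
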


\begin{proof}
$(1)\Rightarrow (2)$: This implication follows from Proposition \ref{main}.

$(2)\Rightarrow (3)$: Suppose $X$ satisfies condition $(2)$ and choose a dense sequence $\{g_j\}$ in $C(\mathbb B^2,X)$ of homological $Z_n$-maps.
 Let $\mathcal H_n=\{z_j\in H_{k(j)}(U_j,V_j):k(j)\leq n{~}\mbox{and}{~}U_j,V_j\in\mathcal B\}$, where $\mathcal B$ is a countable additive base for $X$. Then the arguments from the proof of \cite[Lemma 3.2]{dw} imply that every
 $z_j\in H_{k(j)}(U_j,V_j)$ has a homological \v{C}ech $Z_n$-carrier $(C_j,\partial C_j)\subset (U_j,V_j)$.
%Because any $z\in H_k(U,V)$, $k\leq n$, has a homological $Z_n$-carrier $(C_z,\partial C_z)\subset (U,V)$ (see Proposition \ref{main}),
%every $z_j\in\mathcal H_n$ has a homological $Z_n$-carrier $(C_j,\partial C_j)\subset (U_j,V_j)$.
According to Corollary \ref{hom}, every compact subset of $X\setminus\bigcup C_j$ is a homological $Z_n$-set in $X$. Therefore, we have a sequence
$\{D_j=C_j\cup g_j(\mathbb B^2)\}$ of homological $Z_n$-sets such that every compact subset of $X\setminus\bigcup D_j$ is also a homological $Z_n$-set. Moreover, the density of $\{g_j\}$ in $C(\mathbb B^2,X)$ implies that all compact subsets of $X\setminus\bigcup D_j$ are $Z_2$-sets. So, by Proposition \ref{z}, every compact subset of $X\setminus\bigcup D_j$ is a $Z_n$-set. Following the arguments of \cite[Lemma 3.8]{dw}, one can show that there is another sequence $\{A_i\}\subset X\setminus\bigcup D_j$ of compact sets with each compact subset of $X\setminus\bigcup A_i$ being a homological $Z_n$-set. Because $A_i$ are $Z_n$-sets (as subsets of $X\setminus\bigcup D_j$), for every $k\leq n$ the maps $g\in C(\mathbb B^k,X)$ with
 $g(\mathbb B^k)\cap(\bigcup A_i)=\varnothing$ form a dense $G_\delta$-subset $W_k$ of $C(\mathbb B^k,X)$. Finally, observe that $g(\mathbb B^k)$ is a homological $Z_n$-set for every $g\in W_k$.

$(3)\Rightarrow (1)$: This implication is obvious.
\end{proof}

 \begin{thm}\label{t1}
The following conditions are equivalent for any  $LC^n$-space $X$:
\begin{itemize}
\item[(1)] $X\in DD^2P$ and $C(\mathbb B^n,X)$ contains a dense set $\Lambda_n$ of homological $Z_n$-maps;
\item[(2)] $X$ has the disjoint $n$-disks property.
\end{itemize}
%Moreover, if $X$ is locally compact, the requirement $\Lambda_n$ to be $G_\delta$ in $C(\mathbb B^n,X)$ can be omitted.
\end{thm}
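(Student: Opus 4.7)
The plan is to handle the nontrivial implication $(1)\Rightarrow(2)$; the converse is immediate, since by Toru\'nczyk's criterion $X\in DD^nP$ yields a dense $G_\delta$ of $Z_n$-maps in $C(\mathbb B^n,X)$, which are homological $Z_n$-maps by Proposition~\ref{z}, while $DD^nP\Rightarrow DD^2P$ follows by pre-composing with a retraction $\mathbb B^n\to\mathbb B^2$. The cases $n\leq 2$ are trivial, so I may assume $n\geq 3$.

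For $(1)\Rightarrow(2)$ I would show that the $Z_n$-maps are dense $G_\delta$ in $C(\mathbb B^n,X)$, proceeding in three stages. The first stage manufactures many $Z_n$-maps from $\mathbb B^2$ to $X$. By the argument of Proposition~\ref{main}~$(1)\Rightarrow(2)$, the open continuous surjective restriction $\pi_2^n\colon C(\mathbb B^n,X)\to C(\mathbb B^2,X)$ sends $\Lambda_n$ to a dense set of homological $Z_n$-maps in $C(\mathbb B^2,X)$; combined with the $G_\delta$-ness from Lemma~\ref{G-delta} (whose proof adapts verbatim with $\mathbb B^2$ in place of $\mathbb B^n$), this set is dense $G_\delta$. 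On the other hand, the hypothesis $X\in DD^2P$ together with Toru\'nczyk's criterion supplies a dense $G_\delta$ of $Z_2$-maps in $C(\mathbb B^2,X)$. Intersecting these two dense $G_\delta$-sets in the Baire space $C(\mathbb B^2,X)$ and invoking Proposition~\ref{z} yields a countable dense sequence $\{h_j\}\subset C(\mathbb B^2,X)$ of $Z_n$-maps.

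In the second stage I deploy the $h_j$ as disjointness obstacles. Since every $h_j(\mathbb B^2)$ is a compact $Z_n$-set, each set $W_j=\{g\in C(\mathbb B^n,X):g(\mathbb B^n)\cap h_j(\mathbb B^2)=\varnothing\}$ is open (by compactness) and dense (by the definition of $Z_n$-set), so $W=\bigcap_j W_j$ is dense $G_\delta$ in $C(\mathbb B^n,X)$. Density of $\{h_j\}$ in $C(\mathbb B^2,X)$ then implies that for every $g\in W$ any $\phi\in C(\mathbb B^2,X)$ is approximable by some $h_j$ missing $g(\mathbb B^n)$; hence $g(\mathbb B^n)$ is a $Z_2$-set. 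In the third stage I intersect $W$ with $\Lambda_n$, which is dense $G_\delta$ in $C(\mathbb B^n,X)$ by hypothesis and Lemma~\ref{G-delta}. For every $g\in W\cap\Lambda_n$ the image $g(\mathbb B^n)$ is simultaneously a $Z_2$-set and a homological $Z_n$-set, hence a $Z_n$-set by Proposition~\ref{z}, so $g$ is a $Z_n$-map. Toru\'nczyk's criterion then gives $X\in DD^nP$.

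The main hurdle is the first stage: one must fuse two separately given approximation properties, the $Z_2$-approximation coming from $DD^2P$ on $\mathbb B^2$ and the homological $Z_n$-approximation transported down from $\mathbb B^n$, into genuine $Z_n$-approximation of $\mathbb B^2$-maps before these maps can serve as compact obstacles for $\mathbb B^n$-maps. Once this fusion is achieved via Proposition~\ref{z} inside $C(\mathbb B^2,X)$, the remaining Baire-category and missing-obstacle arguments are routine.
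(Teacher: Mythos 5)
Your argument is correct and takes essentially the same route as the paper's proof: combine the $Z_2$-maps supplied by $DD^2P$ with the homological $Z_n$-information via Proposition \ref{z} to obtain a dense sequence of $Z_n$-maps $\mathbb B^2\to X$, use their compact images as obstacles so that a dense $G_\delta$ of maps $\mathbb B^n\to X$ have $Z_2$-set images, and intersect with the homological $Z_n$-maps to conclude by Proposition \ref{z} again. The only (immaterial) difference is that you perform the first fusion inside $C(\mathbb B^2,X)$ using a $\mathbb B^2$-version of Lemma \ref{G-delta}, whereas the paper intersects $(\pi^n_2)^{-1}(G')$ with $\Lambda_n$ upstairs in $C(\mathbb B^n,X)$ and pushes a dense sequence down by $\pi^n_2$, using heredity of homological $Z_n$-sets under closed subsets.
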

\begin{proof}
Suppose $X$ satisfies condition $(1)$.
Since $X\in\rm{DD^2P}$, there is a dense $G_\delta$-set $G'\subset C(\mathbb B^2,X)$ of $Z_2$-maps. Observe that there is a dense sequence $\{g_i\}\subset C(\mathbb B^2,X)$ such that each $g_i(\mathbb B^2)$ is a $Z_2$-set and a homological $Z_n$-set. Indeed, because the restriction map  $\pi^n_2:C(\mathbb B^n,X)\to C(\mathbb B^2,X)$ is surjective and open, the set $(\pi^n_2)^{-1}(G')$ is dense and $G_\delta$ in $C(\mathbb B^n,X)$.
%On the other hand, $C(\mathbb B^n,X)$ contains a dense $G_\delta$-set $\Lambda_n$ consisting of homological $Z_n$-maps.
Proposition \ref{G-delta} implies that $\Lambda_n$ is $G_{\delta}$ in $C(\mathbb B^n, X)$.
So,  $(\pi^n_2)^{-1}(G')\cap\Lambda_n$ is also dense
 in $C(\mathbb B^n,X)$ and it contains a dense sequence $\{f_i\}$. Obviously, the sequence $\{g_i=\pi^n_2(f_i)\}$ has the required property. Thus, by Proposition \ref{z}, all $g_i(\mathbb B^2)$ are $Z_n$-sets. Consequently, the set
 $\Gamma_n=\{f\in C(\mathbb B^n,X): f(\mathbb B^n)\cap(\bigcup g_i(\mathbb B^2))=\varnothing\}$ is dense and $G_\delta$ in $C(\mathbb B^n,X)$.
 Moreover, the density of $\{g_i\}$ in $C(\mathbb B^2,X)$ implies that $f(\mathbb B^n)$ is a $Z_2$-set in $X$ for all $f\in\Gamma_n$.
 Then  $\Gamma_n\cap\Lambda_n$ is a dense subset of $C(\mathbb B^n,X)$ and consists of maps $f$ such that $f(\mathbb B^n)$ is both a homological $Z_n$-set and a $Z_2$-set in $X$. Therefore, each $f\in\Gamma_n\cap\Lambda_n$ is a $Z_n$-map, which yields that $X$ has the disjoint $n$-disks property.

 %Because  $X\in\rm{DD^nP}$ implies $X\in DD^2P$ and the existence of a dense $G_\delta$-subset of $C(\mathbb B^n,X)$ consisting of $Z_n$-maps,
 The implication $(2)\Rightarrow (1)$ is obvious.
 \end{proof}

Following Bowers \cite{bo}, we say that a space $X$ has a {\em nice $ANR$ local compactification} if there is a locally compact $ANR$-space $Y$ containing $X$ such that $X=Y\setminus F$ for some $Z_\sigma$-set $F$ (i.e., a countable union of $Z$-sets) in $Y$. Any such $X$ is complete $ANR$, see \cite{to2}.
Toru\'{n}czyk's \cite{to1} characterization theorem of $Q$-manifolds yields the following proposition (the special case when $F=\varnothing$ was established in \cite{br}):
\begin{pro}\label{t}
Let $\overline X$ be a nice $ANR$ local compactification of a space $X$. Then $\overline X$ is a $Q$-manifold if and only if $X$ has  the disjoint disks property and for every $n$ the space $C(\mathbb B^n,X)$ contains a dense set of homological $Z_n$-maps.
\end{pro}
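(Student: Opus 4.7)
The plan is to combine Theorem \ref{t1}, Toru\'{n}czyk's $Q$-manifold characterization, and a Baire-category argument that transfers the disjoint $n$-disks property between $X$ and $\overline X$. The key step is the following density-transfer observation: since $F=\bigcup_{i\in\mathbb N}F_i$ with each $F_i$ a $Z$-set in $\overline X$, each set $\{f\in C(\mathbb B^n,\overline X):f(\mathbb B^n)\cap F_i=\varnothing\}$ is open in $C(\mathbb B^n,\overline X)$ (by compactness of $\mathbb B^n$ and closedness of $F_i$) and dense (since $F_i$, being a $Z$-set, has the $Z_n$-property for every $n$). Hence $C(\mathbb B^n,X)$ is a dense $G_\delta$ subset of the completely metrizable space $C(\mathbb B^n,\overline X)$. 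A two-sided approximation argument then shows that $\overline X\in\rm{DD^nP}$ if and only if $X\in\rm{DD^nP}$: starting from $f,g\colon\mathbb B^n\to\overline X$ one approximates by maps into $X$ and applies $\rm{DD^nP}$ of $X$; conversely, a disjoint pair $\tilde f,\tilde g\colon\mathbb B^n\to\overline X$ produced by $\rm{DD^nP}$ of $\overline X$ can be further approximated by maps into $X$ within a small fraction of the positive number $\operatorname{dist}(\tilde f(\mathbb B^n),\tilde g(\mathbb B^n))$, which preserves disjointness.

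For the implication $(\Rightarrow)$, assume $\overline X$ is a $Q$-manifold. Toru\'{n}czyk's theorem \cite{to1} yields $\overline X\in\rm{DD^nP}$ for every $n$, and the density-transfer observation gives $X\in\rm{DD^nP}$ for every $n$. In particular $X$ has the disjoint disks property, and for each $n$ the space $C(\mathbb B^n,X)$ contains a dense set of $Z_n$-maps; since $X$ is a complete $ANR$, hence an $LC^\infty$-space, Proposition \ref{z} promotes these $Z_n$-maps to homological $Z_n$-maps.

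For the implication $(\Leftarrow)$, assume $X$ has the disjoint disks property and that $C(\mathbb B^n,X)$ contains a dense set of homological $Z_n$-maps for every $n$. Because $X$ is an $ANR$, it is $LC^n$ for every $n$, so Theorem \ref{t1} applied to each $n$ yields $X\in\rm{DD^nP}$ for every $n$. The density-transfer observation then gives $\overline X\in\rm{DD^nP}$ for every $n$, and Toru\'{n}czyk's characterization \cite{to1} concludes that the locally compact separable $ANR$-space $\overline X$ is a $Q$-manifold. The step requiring the most care is the density-transfer observation, where the two-sided approximation must be managed so that disjointness survives in both passages between $X$ and $\overline X$.
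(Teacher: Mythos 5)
Your proof is correct, but the left-to-right implication ($X$ satisfies the two conditions $\Rightarrow$ $\overline X$ is a $Q$-manifold) is organized differently from the paper. The paper keeps Theorem \ref{t1} on the side of $\overline X$: it transfers \emph{both} hypotheses from $X$ to $\overline X$, namely it checks that $\overline X\in\rm{DD^2P}$ by approximating into $X$, and — the more delicate step — that $C(\mathbb B^n,\overline X)$ contains a dense set of homological $Z_n$-maps. The latter requires showing that a compactum which is a homological $Z_n$-set in $X$ remains a homological $Z_n$-set in $\overline X$; the paper does this via Lemma \ref{1}, pushing singular representatives of a class $z\in H_m(U,V)$ (with $U,V$ open in $\overline X$) into $X$ by homotopies and then producing a carrier in $(U\cap X,V\cap X)$ missing the image. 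You instead apply Theorem \ref{t1} to $X$ itself (legitimate: $X$ is a complete $ANR$, hence $LC^n$, and Theorem \ref{t1} needs no local compactness), obtaining $X\in\rm{DD^nP}$ for all $n$, and then transfer $\rm{DD^nP}$ to $\overline X$ using only the density of $C(\mathbb B^n,X)$ in $C(\mathbb B^n,\overline X)$ coming from $F$ being a $\sigma Z$-set. This bypasses the homological-carrier step entirely and is the more economical argument; what the paper's route buys in exchange is the explicit intermediate information that homological $Z_n$-sets of $X$ are homological $Z_n$-sets of $\overline X$ and that $\overline X$ itself has dense homological $Z_n$-maps. Your converse direction (approximate the disjoint pair supplied by $\rm{DD^nP}$ of $\overline X$ into $X$ within a fraction of the distance between the two disjoint compact images, then note $Z_n$-maps are homological $Z_n$-maps via Proposition \ref{z}) is essentially the paper's argument, just spelled out in more detail.
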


\begin{proof}
Let $X=\overline X\setminus F$, where $F$ is a $\sigma Z$-set in $\overline X$ (i.e., $F$ is the  union of countably many $Z$-sets).
Suppose $X$ has  the disjoint disks property and every $C(\mathbb B^n,X)$ contains a dense set of homological $Z_n$-maps.
To show that $\overline X$ is a $Q$-manifold, according to Theorem \ref{t1} and Toru\'{n}czyk's \cite{to1} characterization theorem of $Q$-manifolds, it suffices to prove $\overline X$ satisfies the following two conditions: $(i)$ $\overline X$ has the disjoint disks property and $(ii)$ every
$C(\mathbb B^n,\overline X)$ contains a dense set of homological $Z_n$-maps. Let $f,g:\mathbb B^2\to\overline X$ be two maps. Since $F$ is a $\sigma Z$-set in $\overline X$, we can approximate $f,g$, respectively, by maps $f',g':\mathbb B^2\to X$. Then, using that $X\in\rm{DD^2P}$, approximate $f',g'$ by maps $f'',g'':\mathbb B^2\to X$ with $f''(\mathbb B^2)\cap g''(\mathbb B^2)=\varnothing$. To show condition $(ii)$ , let $f\in C(\mathbb B^n,\overline X)$. Since $C(\mathbb B^n,X)$ contains a dense set of homological $Z_n$-maps, and using again that $F$ is $\sigma Z$-set in $\overline X$, we can suppose that $f(\mathbb B^n)$ is a homological $Z_n$-set in $X$. It remains to show that
$f(\mathbb B^n)$ is a homological $Z_n$-set in $\overline X$. To this end, by
Lemma \ref{1}, it suffices to show that any  $z\in H_m(U,V)$, where $m\leq n$ and $U,V$ are open sets in $\overline X$, has a singular carrier disjoint from $f(\mathbb B^n)$. If
$\sum_{i=1}^k m_ih_i$ is a singular representation of $z$ with $h_i\in C(\mathbb B^n,\overline X)$, we approximate
each $h_i$ by a map $h_i'\in C(\mathbb B^n,X)$ such that $h'(\mathbb B^n,\mathbb S^{n-1})\subset (U\cap X,V\cap X)$ and $h'$ is homotopic to $h$ in $\overline X$. Therefore, we may assume that  $h_i\in C(\mathbb B^n,X)$ for all $i$ and $z\in H_m(U\cap X,V\cap X)$. Since  $f(\mathbb B^n)$ is a homological $Z_n$-set in $X$, by Lemma \ref{1}, there exists a singular carrier $(C_z,\partial C_z)\subset (U\cap X,V\cap X)$ of $z$ with
$C_z\cap f(\mathbb B^n)=\varnothing$.

If $\overline X$ is a $Q$-manifold, then it has the disjoint $n$-disks property for every $n$ \cite{to1}. Since $F$ is a $\sigma Z$-set in $\overline X$, this implies that $X$ also has the disjoint $n$-disks property for every $n$. Equivalently, each function space $C(\mathbb B^n,X)$ contains a dense set of $Z_n$-maps. Because every $Z_n$-map is a homological $Z_n$-map, the proof is completed.
\end{proof}

\begin{thm}\label{q-mfd}
Let $X$ be a locally compact $LC^n$-space such that $C(\mathbb B^n,X)$ contains a dense set of homological $Z_n$-maps.
Then $X\times Y$ has the disjoint $n$-disks property for every non-trivial locally compact $ANR$-space $Y$.
\end{thm}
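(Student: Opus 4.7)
The plan is to verify the two hypotheses of Theorem~\ref{t1} for $X\times Y$ and then invoke that theorem to conclude $X\times Y\in\rm{DD^nP}$. Since $X$ is $LC^n$ and $Y$, being an ANR, is $LC^\infty$, the product $X\times Y$ is $LC^n$. It therefore remains to show that $X\times Y\in\rm{DD^2P}$ and that $C(\mathbb B^n, X\times Y)$ contains a dense set of homological $Z_n$-maps.

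For the disjoint $2$-disks property of $X\times Y$: since every homological $Z_n$-set is a homological $Z_1$-set by definition, the hypothesis provides a dense set of homological $Z_1$-maps in $C(\mathbb B^n, X)$, and Proposition~\ref{dadp}(1) then yields $X\in\rm{DD^{\{1,n\}}P}$, in particular $X\in\rm{DD^{\{1,2\}}P}$. Since $Y$ is a non-trivial locally compact ANR without isolated points, every neighborhood of each point of $Y$ contains a non-degenerate arc. Given two maps $f=(f_1,f_2), g=(g_1,g_2)\colon\mathbb B^2\to X\times Y$, I would first approximate $f_2,g_2$ by PL maps into a finite $2$-dimensional polyhedron $P\subset Y$ using the ANR property of $Y$, and then separate the $X$-components on generic level sets of $f_2, g_2$ over $P$ (which are $1$-dimensional in $\mathbb B^2$) by applying the disjoint arc-disk property of $X$.

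For the density of homological $Z_n$-maps in $C(\mathbb B^n, X\times Y)$, the key step is the following transfer lemma: \emph{if $A\subset X$ is a homological $Z_n$-set, then $A\times Y$ is a homological $Z_n$-set in $X\times Y$}. Granted this lemma, for any $f=(f_1,f_2)\in C(\mathbb B^n, X\times Y)$, I approximate $f_1$ in the limitation topology by a homological $Z_n$-map $\tilde f_1\colon\mathbb B^n\to X$ using the hypothesis on $X$. Then $(\tilde f_1,f_2)(\mathbb B^n)\subset \tilde f_1(\mathbb B^n)\times Y$, which is a homological $Z_n$-set by the transfer lemma; since the homological $Z_n$-property is hereditary with respect to closed subsets \cite{bck}, the image of $(\tilde f_1,f_2)$ is itself a homological $Z_n$-set, and this map approximates $f$ as required.

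The principal obstacle is establishing the transfer lemma. By Lemma~\ref{1}, it suffices to prove that $H_m(W, W\setminus(A\times Y))=0$ for every open $W\subset X\times Y$ and every $m\leq n$. For a product open $W=U\times V$, the K\"unneth formula for singular homology yields
\[H_m(U\times V, (U\setminus A)\times V)\cong\bigoplus_{i+j=m}H_i(U,U\setminus A)\otimes H_j(V)\oplus\text{Tor terms},\]
which vanishes for $m\leq n$ since $H_i(U,U\setminus A)=0$ for all $i\leq n$. For an arbitrary open $W$, cover $W$ by product opens and apply Mayer--Vietoris, reducing to finite covers via the compact support of singular cycles. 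Once the transfer lemma is established, Theorem~\ref{t1} applied to $X\times Y$ immediately yields $X\times Y\in\rm{DD^nP}$.
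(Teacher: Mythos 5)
Your proposal follows essentially the same route as the paper: get the disjoint arc-disk property of $X$ from Proposition \ref{dadp}, approximate only the $X$-coordinate of a map $\mathbb B^n\to X\times Y$ by a homological $Z_n$-map $g$, show that $g(\mathbb B^n)\times Y$ is a homological $Z_n$-set in $X\times Y$ via the relative K\"unneth formula, pass to the image of $(g,f_2)$ by heredity for closed subsets, and conclude with Theorem \ref{t1}. Your Mayer--Vietoris/compact-supports reduction from an arbitrary open $W\subset X\times Y$ to basic product opens is a correct substitute for the paper's citation of \cite[Proposition 3.6]{bck} (though note this reduction is just the definition of a homological $Z_n$-set, not an application of Lemma \ref{1}).

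The one genuinely weak point is your treatment of the disjoint disks property of $X\times Y$. What you need is precisely Daverman's result that the disjoint arc-disk property of $X$ implies the disjoint disks property of $X\times Y$ for every non-degenerate locally compact ANR $Y$, i.e. \cite[Proposition 2.10]{d}, which is exactly what the paper invokes. Your sketch of it does not work as stated: a non-trivial locally compact ANR $Y$ need not contain any $2$-dimensional polyhedron (there are $2$-dimensional ANRs containing no $2$-cell), so $f_2,g_2$ cannot in general be approximated by maps into a finite $2$-polyhedron $P\subset Y$; and point-preimages or level sets of PL maps $\mathbb B^2\to P$ need not be $1$-dimensional (a map collapsing a $2$-simplex already fails this), so the ``generic level sets'' step requires the genuine general-position bookkeeping carried out in Daverman's proof. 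Replace that sketch by a citation of, or a careful reproduction of, \cite[Proposition 2.10]{d}, and the rest of your argument is complete and matches the paper's.
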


\begin{proof}
%Suppose for every $n$ the space $C(\mathbb B^n,X)$ contains a dense set of homological $Z_n$-maps.
According to Proposition \ref{dadp}, $X$ has the disjoint arc-disk property. Then, following the proof of \cite[Proposition 2.10]{d}, one can show that $X\times Y$ has the disjoint disks property. Since $X\times Y$ is $LC^n$,
by Theorem \ref{t1}, it suffices to show that
 $C(\mathbb B^n,X\times Y)$  contains a dense set of homological $Z_n$-maps. To this end, let $f=(f_1,f_2)\in C(\mathbb B^n,X\times Y)$, where
$f_1\in C(\mathbb B^n,X)$ and $f_2\in C(\mathbb B^n,Y)$.
We can approximate $f_1$ by maps
$g\in C(\mathbb B^n,X)$ such that $g(\mathbb B^n)$ are homological $Z_n$-sets in $X$. Then, for any such $g$ consider the map
$h_g=(g,f_2):\mathbb B^n\to X\times Y$. Obviously, the maps $h_g$ approximate $f$ and $h_g(\mathbb B^n)\subset g(\mathbb B^n)\times Y$. It remains to show that $g(\mathbb B^n)\times Y$ is a homological $Z_n$-set in  $X\times Y$.
Indeed, let $\mathcal B_X$ and $\mathcal B_Y$ be bases for $X$ $Y$, respectively. Then, by \cite[Proposition 3.6]{bck}, it suffices to show that for any $U\in\mathcal B_X$ and $V\in\mathcal B_Y$  we have
$H_k(U\times V,(U\times V)\setminus(g(\mathbb B^n)\times Y))=0$ for all $k\leq n$. And this is really true because  by the K\"{u}nneth formula
the group $H_k(U\times V,(U\times V)\setminus(g(\mathbb B^n)\times Y))$ is isomorphic to the direct sum of
$\sum_{i+j\leq k}H_i(U,U\setminus g(\mathbb B^n))\otimes H_j(V)$ and  $\sum_{i+j\leq k-1}H_i(U,U\setminus g(\mathbb B^n))* H_j(V)$, where
$H_i(U,U\setminus g(\mathbb B^n))\otimes H_j(V)$ and $H_i(U,U\setminus g(\mathbb B^n))* H_j(V)$ stand for the tensor and torsion products of
$H_i(U,U\setminus g(\mathbb B^n))$ and $H_j(V)$.
\end{proof}

Another implication of Toru\'{n}czyk's \cite{to1} $Q$-manifolds characterization theorem provides next corollary.
\begin{cor}\label{anr}
Let $X$ be a locally compact $ANR$ such that for every $n$ the space $C(\mathbb B^n,X)$ contains a dense set of homological $Z_n$-maps.
Then $X\times Y$ is a $Q$-manifold for every non-trivial locally compact $ANR$-space $Y$.
\end{cor}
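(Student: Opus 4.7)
The plan is to invoke Toru\'{n}czyk's characterization of $Q$-manifolds, which says that a locally compact separable $ANR$ is a $Q$-manifold if and only if it has the disjoint $n$-disks property for every $n$. So the goal reduces to checking these two conditions for $X\times Y$.

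First, I would note that $X\times Y$ is automatically a locally compact $ANR$, since this class is closed under finite products, and it has no isolated points because $Y$ (and $X$) are non-trivial without isolated points. In particular, $X\times Y$ is $LC^n$ for every $n$, which is what is needed to apply the machinery developed in Section 2.

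Next, for each fixed $n$, the hypotheses of Theorem \ref{q-mfd} are satisfied: $X$ is a locally compact $LC^n$-space (as a locally compact $ANR$) and $C(\mathbb B^n,X)$ contains a dense set of homological $Z_n$-maps. Theorem \ref{q-mfd} then directly yields that $X\times Y$ has the disjoint $n$-disks property. Since this is true for every $n$, the second hypothesis of Toru\'{n}czyk's characterization is verified.

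Combining the two points, $X\times Y$ is a locally compact separable $ANR$ satisfying $\rm{DD^nP}$ for every $n$, hence a $Q$-manifold by \cite{to1}. There is no real obstacle here since Theorem \ref{q-mfd} does all the heavy lifting; the corollary is essentially a uniform application of that theorem across all $n$, followed by a single appeal to Toru\'{n}czyk's theorem.
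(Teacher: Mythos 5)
Your proof is correct and matches the paper's intended argument: the corollary is stated there as an immediate consequence of Theorem \ref{q-mfd} (applied for every $n$ to get the disjoint $n$-disks property of $X\times Y$) together with Toru\'{n}czyk's characterization of $Q$-manifolds from \cite{to1}. Nothing essential differs between your route and the paper's.
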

A similar statement was established in \cite[Theorem 14]{br}.

Corollary \ref{anr} implies that $X\times\mathbb B^1$ is a $Q$-manifold provided $X$ is a locally compact $ANR$ such that for every $n$ the space
$C(\mathbb B^n,X)$ contains a dense set of homological $Z_n$-maps.
We say that a space $X$ is a {\em fake $Q$-manifold} if $X\times\mathbb B^1$ is a $Q$-manifold, but $X$ is not a $Q$-manifold. According to \cite{dw}, any fake $Q$-manifold has the disjoint \v{C}ech carrier property but not the disjoint disks property.
All existing examples (see, \cite{br}, \cite{dw} and \cite{s}) of fake $Q$-manifolds $X$ have the property that for any $n$ the space $C(\mathbb B^n,X)$ contains a dense set of homological $Z_n$-maps. So, the following question is very natural.

\begin{qu}\label{q}
Let $X$ be a locally compact $ANR$. Is it true that $X\times\mathbb B^1$ is a $Q$-manifold if and only if for every $n$ the space $C(\mathbb B^n,X)$ contains a dense set of homological $Z_n$-maps$?$
\end{qu}

According to next proposition, Question \ref{q} has a positive solution if we can show that for every $n\geq 2$ the space $C(\mathbb B^2,X)$ contains a dense set of homological $Z_n$-maps provided $X\times\mathbb B^1$ is a $Q$-manifold. In particular, that would be true if $C(\mathbb B^2,X)$ contains a dense subset of maps with finite-dimensional images.

\begin{pro}
Let $X$ be a locally compact $ANR$ such that for every $n$ the space $C(\mathbb B^2,X)$ contains a dense set of homological $Z_n$-maps. Then
$X\times\mathbb B^1$ is a $Q$-manifold if and only if each $C(\mathbb B^n,X)$, $n\geq 2$, contains a dense set of homological $Z_n$-maps.
\end{pro}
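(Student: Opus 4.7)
The plan is to prove the two implications separately, each by assembling tools already established in the paper. I would handle the easy backward direction first and then the forward direction, which is where the standing hypothesis really does work together with the Daverman--Walsh theorem.

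For the backward implication, assume every $C(\mathbb B^n,X)$ with $n\geq 2$ contains a dense set of homological $Z_n$-maps. To apply Corollary \ref{anr} with $Y=\mathbb B^1$ and conclude that $X\times\mathbb B^1$ is a $Q$-manifold, I also need density of homological $Z_1$-maps in $C(\mathbb B^1,X)$, which is not directly part of the assumption. This missing case is supplied by the standing hypothesis: $C(\mathbb B^2,X)$ contains a dense set of homological $Z_1$-maps, and the implication $(1)\Rightarrow(2)$ of Proposition \ref{main}, applied via the open surjective restriction $\pi^2_1\colon C(\mathbb B^2,X)\to C(\mathbb B^1,X)$, transfers this density to $C(\mathbb B^1,X)$. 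With density of homological $Z_n$-maps in $C(\mathbb B^n,X)$ now in hand for every $n\geq 1$, Corollary \ref{anr} yields that $X\times\mathbb B^1$ is a $Q$-manifold.

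For the forward implication, suppose $X\times\mathbb B^1$ is a $Q$-manifold. Since the product of a $Q$-manifold with $\mathbb B^1$ is again a $Q$-manifold (a classical consequence of $Q\times\mathbb B^1\cong Q$ together with Toru\'{n}czyk's characterization), the space $X\times\mathbb B^2\cong (X\times\mathbb B^1)\times\mathbb B^1$ is also a $Q$-manifold. The Daverman--Walsh theorem recalled in the introduction then forces $X$ to have the disjoint \v{C}ech carriers property; in particular, $X\in\mathrm{DC^nP}$ for every $n$. Combining this with the standing hypothesis that $C(\mathbb B^2,X)$ contains a dense set of homological $Z_n$-maps for every $n$, condition $(2)$ of Proposition \ref{dd^np} is verified (noting that $X$ is $LC^n$ for every $n$, since $X$ is an ANR), and its equivalence with $(1)$ delivers a dense set of homological $Z_n$-maps in $C(\mathbb B^n,X)$ for each $n\geq 2$, as required.

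Both steps are essentially mechanical once one recognizes that the standing hypothesis on $C(\mathbb B^2,X)$ is precisely the extra ingredient needed to activate Proposition \ref{dd^np}(2). The only item not developed in the paper itself is the product-stability of $Q$-manifolds under multiplication by $\mathbb B^1$, which is a standard fact, so I expect no genuine obstacle in writing the proof out.
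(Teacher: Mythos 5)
Your proof is correct and follows essentially the same route as the paper: for the forward direction you pass from $X\times\mathbb B^1$ to $X\times\mathbb B^2$, invoke the Daverman--Walsh theorem to get $X\in\mathrm{DC^nP}$, and then apply Proposition \ref{dd^np}(2)$\Rightarrow$(1), exactly as the paper does; for the backward direction you use the product theorem (Corollary \ref{anr}, i.e.\ Theorem \ref{q-mfd} plus Toru\'{n}czyk), which is the paper's argument as well. Your extra care in supplying the $n=1$ case via restriction from $C(\mathbb B^2,X)$ is harmless and merely makes explicit a point the paper leaves implicit.
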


\begin{proof}
If $X\times\mathbb B^1$ is a $Q$-manifold, then so is $X\times\mathbb B^2$. Hence, by \cite[Corollary 6.2]{dw}, $X\in\rm{DC^nP}$.
This, according to Proposition \ref{dd^np}, implies that every $C(\mathbb B^n,X)$ contains a dense set of homological $Z_n$-maps.
The other implication follows from Theorem \ref{q-mfd}.
\end{proof}

According to \cite{dh}, the so called {\em disjoint path concordance property} characterizes locally compact $ANR$s $X\in\rm{DD^{\{1,1\}}}P$ such that
$X\times\mathbb R$, or equivalently $X\times\mathbb B^1$, has the disjoint disks property. The disjoint path concordance property is quite different from the property that for every $n$ the space $C(\mathbb B^n,X)$ contains a dense set of homological $Z_n$-maps, but
%We say that a metric space $(X,\rho)$ satisfies the
%disjoint path concordance property if for any two homotopies $f_1,f_2:\mathbb B^1\times [0,1]\to X$ and any $\varepsilon>0$, there exists path homotopies $F_1,F_2:\mathbb B^1\times [0,1]\to X\times [0,1]$ such that:
%\begin{itemize}
%\item $F_i(\mathbb B^1\times\{\rm{e}\})\subset X\times\{\rm{e}\}$, where $i=1,2$ and $\rm{e}\in\{0,1\}$;
%\item $F_1(\mathbb B^1\times [0,1])\cap F_2(\mathbb B^1\times [0,1])=\varnothing$;
%\item $\rho(f_i,\rm{proj_X}F_i)<\varepsilon$.
%\end{itemize}
the results from \cite{dw} yield the following description of the locally compact $ANR$-spaces $X$ such that $X\times\mathbb B^1$ is a $Q$-manifold.
\begin{pro}
Let $X$ be a locally compact $ANR$-space. Then $X\times\mathbb B^1$ is a $Q$-manifold if and only if $X$ has both the disjoint path concordance property and the disjoint \v{C}ech carrier property.
\end{pro}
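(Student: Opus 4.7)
My plan is to derive this equivalence directly from the two cited results together with the Daverman--Walsh refinement of Toru\'{n}czyk's theorem, namely that a locally compact $ANR$ is a $Q$-manifold if and only if it has $\rm{DD^2P}$ and $\rm{DCP}$.

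For the necessity, suppose $X\times\mathbb B^1$ is a $Q$-manifold. Then $X\times\mathbb B^2=(X\times\mathbb B^1)\times\mathbb B^1$ is also a $Q$-manifold, and the $\rm{DCP}$ characterization from \cite{dw} (a locally compact $ANR$ $Y$ satisfies $\rm{DCP}$ iff $Y\times\mathbb B^2$ is a $Q$-manifold), applied to $Y=X$, yields $X\in\rm{DCP}$. Simultaneously, $X\times\mathbb B^1$ inherits the disjoint disks property from being a $Q$-manifold, and then \cite{dh} provides the disjoint path concordance property for $X$.

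For the sufficiency, assume $X$ has the disjoint path concordance property and $X\in\rm{DCP}$. Then \cite{dh} gives that $X\times\mathbb B^1$ has $\rm{DD^2P}$, while \cite{dw} gives that $X\times\mathbb B^2$ is a $Q$-manifold. Since the product of a $Q$-manifold with $\mathbb B^1$ remains a $Q$-manifold, $X\times\mathbb B^3=(X\times\mathbb B^2)\times\mathbb B^1$ is a $Q$-manifold. Re-bracketing $X\times\mathbb B^3$ as $(X\times\mathbb B^1)\times\mathbb B^2$ and applying \cite{dw} once more, this time to the locally compact $ANR$ $X\times\mathbb B^1$, shows $X\times\mathbb B^1\in\rm{DCP}$. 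Hence $X\times\mathbb B^1$ is a locally compact $ANR$ enjoying both $\rm{DD^2P}$ and $\rm{DCP}$, and the Daverman--Walsh refinement of Toru\'{n}czyk's theorem finishes the argument.

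The main obstacle is the technical hypothesis $X\in\rm{DD^{\{1,1\}}P}$ under which the Daverman--Halverson characterization is formulated. In the sufficiency direction this hypothesis is absorbed into the very definition of the disjoint path concordance property, so it is available for free; in the necessity direction it is an automatic consequence of $X\times\mathbb B^1$ being a $Q$-manifold (since the $Q$-manifold carries general position for pairs of $1$-cells, which projects to $X$). Aside from this bookkeeping, the proof is a bilateral application of the two cited characterizations, with no new ingredients introduced.
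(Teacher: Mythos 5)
Your overall route is the intended one: the paper states this proposition without a written proof, as a direct combination of the Daverman--Halverson characterization \cite{dh} with the Daverman--Walsh results \cite{dw} (namely $Y\in\mathrm{DCP}$ iff $Y\times\mathbb B^2$ is a $Q$-manifold, and $Y$ is a $Q$-manifold iff $Y$ has $\mathrm{DD^2P}$ and $\mathrm{DCP}$), and your sufficiency argument --- $X\in\mathrm{DCP}$ gives $X\times\mathbb B^3=(X\times\mathbb B^1)\times\mathbb B^2$ a $Q$-manifold, hence $X\times\mathbb B^1\in\mathrm{DCP}$, while the path concordance property gives $\mathrm{DD^2P}$ for $X\times\mathbb B^1$ --- is exactly this combination and is fine (reading the cited form of \cite{dh} so that the concordance property already entails $X\in\mathrm{DD}^{\{1,1\}}\mathrm{P}$, e.g.\ by restricting disjoint concordances to an end level).

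The genuine gap is in the necessity direction, in your justification that $X\in\mathrm{DD}^{\{1,1\}}\mathrm{P}$, which you need before \cite{dh} can be invoked. You claim this is automatic because ``the $Q$-manifold carries general position for pairs of $1$-cells, which projects to $X$.'' That argument does not work: if two arcs (or cells) in $X\times\mathbb B^1$ have disjoint images, their projections to $X$ need not be disjoint, since the approximating maps are free to move in the $\mathbb B^1$-coordinate; so general position in $X\times\mathbb B^1$ does not descend to $X$ by projection. The fact you need is true, but it requires the homological machinery rather than projection: from $X\times\mathbb B^2$ being a $Q$-manifold you already have $X\in\mathrm{DCP}$, and the arguments of \cite[Lemmas 3.2 and 3.8]{dw} then produce a $\sigma$-compact family of homological $Z_\infty$-carriers such that every compactum in its complement is a homological $Z_\infty$-set; since paths can be pushed off homological $Z_1$-sets (the Banakh--Repov\v{s} observation quoted in the proof of Proposition \ref{dadp}), a dense $G_\delta$ of maps in $C(\mathbb B^1,X)$ have homological $Z_1$ images, and a second application of the same push-off gives $X\in\mathrm{DD}^{\{1,1\}}\mathrm{P}$. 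With that substitution the necessity direction closes; as written, the projection step is a real hole.
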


%Because of Corollary \ref{anr}, there is another question closely related to Question \ref{q}.

%\begin{qu}\label{qq}
%Let $X$ be a locally compact $ANR$. Is it true that each $C(\mathbb B^n,X)$, $n\geq 2$, contains a dense set of homological $Z_n$-maps
%if and only if $X\times Y$ is a $Q$-manifold for every non-trivial locally compact $ANR$-space $Y$$?$
%\end{qu}

\section{Homological $Z_n$-maps and $l_2$-manifolds}

In this section the function spaces $C(Y,X)$ are equipped with the {\em limitation topology}, see \cite{bo1} and \cite{to3}.
Let $\rm{cov}(X)$ denote the collection of all open covers of $X$.
 For a map $f\in C(Y,X)$ and $\mathcal U\in\rm{cov}(X)$ let $B(f,\mathcal U)$ be the set of maps $g\in C(Y,X)$ that are $\mathcal U$-close to $f$.
A set $U\subset C(Y,X)$ is open in the limitation topology if for every $f\in U$ there exists $\mathcal U\in\rm{cov}(X)$ such that
$B(f,\mathcal U)\subset U$. %Similarly, $U$ is open in the modified limitation topology if for every $f\in U$ there is $\mathcal U\in\rm{cov}(f)$
According to \cite{bo1} and \cite{to3}, $C(Y,X)$ with the limitation topology is a Baire space.

We say $X$ satisfies the {\em discrete $n$-cells property}, where $n\leq\infty$, if for each map $f:\oplus_{i=1}^\infty\mathbb B^n_i\to X$ of the countable free union of
$n$-cells ($\infty$-cells are Hilbert cubes $Q_i$) into $X$ and each open cover $\mathcal U$ of $X$ there exists a map
$g:\oplus_{i=1}^\infty\mathbb  B^n_i\to X$ such that $g$ is $\mathcal U$-close to $f$ and $\{g(\mathbb B^n_i)\}_{i=1}^\infty$ is a discrete family in $X$. The discrete $\infty$-cells property is usually called the {\em discrete approximation property}.
%Toru\'{n}czyk's characterization \cite{to3} of $l_2$-manifolds is well known: an $ANR$ space is an $l_2$-manifold if and only if it satisfies the discrete approximation property.

%Following Bowers \cite{bo} we say that a space $X$ has a nice $ANR$ local compactification if $X=\overline X\setminus F$ for some $\sigma Z$-set $F$ in a locally compact space $\overline X$. According to \cite{to2}, any such $X$ is a complete $ANR$space.
%According to \cite{bo}, $X$ satisfies the discrete carrier property if for every open cover $\mathcal U$ of $X$ and any sequence $\{z_i\in H_{k(i)}(U_i,V_i)\}_{i=1}^\infty$, where $V_i\subset U_i$ are open in $X$ and $k(i)$ any integer, there exists for each $i$ a singular carrier $(C_i,\partial C_i)\subset (\rm{st}(U_i,\mathcal U),\rm{st}(V_i,\mathcal U))$ for the inclusion-induced homomorphism $i_{*}(z_i): H_{k(i)}(U_i,V_i)\to H_{k(i)}(\rm{st}(U_i,\mathcal U),\rm{st}(V_i,\mathcal U))$, such that $\{C_i\}_{i=1}^\infty$forms a discrete family in $X$.

Our first result in this section  provides a homological characterization  of $l_2$-manifolds in the boundary set setting.

\begin{thm}\label{l2manifold}
Suppose $X$ has a nice $ANR$ local compactification. Then $X$ is an $l_2$-manifold if and only if $X$ satisfies the following conditions:
\begin{itemize}
\item[(1)] $X$ has the disjoint disks property;
\item[(2)] For every $n\geq 2$ the space $\displaystyle C(\oplus_{i=1}^\infty\mathbb B^n_i,X)$,  equipped with the limitation topology, contains a dense $G_\delta$-set of maps $f$ such that the set
$\displaystyle f(\oplus_{i=1}^\infty\mathbb B^n_i)\subset X$ is closed and each $f(\mathbb B^n_i)$ is a homological $Z_n$-set in $X$.
\end{itemize}
\end{thm}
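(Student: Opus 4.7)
The proof has two directions. For necessity, assume $X$ is an $l_2$-manifold. Toru\'{n}czyk's characterization \cite{to3} yields the disjoint disks property for $X$ together with a dense $G_\delta$-set in $C(\mathbb N\times Q,X)$ of maps whose component images form a discrete family of $Z$-sets. Given any $f\in C(\oplus_{i=1}^\infty\mathbb B^n_i,X)$, extend $f$ coordinate-wise through retractions $r_i\from Q_i\to\mathbb B^n_i$ to obtain $\tilde f\in C(\oplus_{i=1}^\infty Q_i,X)$, approximate $\tilde f$ by some $\tilde g$ from this $G_\delta$-set, and restrict to $g:=\tilde g|_{\oplus\mathbb B^n_i}$. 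Then each $g(\mathbb B^n_i)\subset\tilde g(Q_i)$ is a $Z$-set (hence a $Z_n$-set, hence a homological $Z_n$-set by Proposition \ref{z}), and $\{g(\mathbb B^n_i)\}$ inherits discreteness from $\{\tilde g(Q_i)\}$, so its union is closed. A $G_\delta$-argument paralleling Lemma \ref{G-delta} then provides the required dense $G_\delta$ in $C(\oplus\mathbb B^n_i,X)$.

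For sufficiency, let $\overline X$ be a nice $ANR$ local compactification of $X$ and put $F:=\overline X\setminus X$, a $\sigma Z$-set. Restricting a map appearing in $(2)$ to a single summand produces density of homological $Z_n$-maps in $C(\mathbb B^n,X)$ for every $n\geq 2$; combined with $(1)$ and Proposition \ref{t}, this forces $\overline X$ to be a $Q$-manifold. By Toru\'{n}czyk's characterization of $l_2$-manifolds \cite{to3}, it therefore suffices to verify the discrete approximation property for $X$. Given $\{h_i\}\subset C(Q,X)$ and $\mathcal U\in\rm{cov}(X)$, use uniform continuity of each $h_i$ to approximate it $\mathcal U$-closely by a factorization $f_i\circ\pi_n$, where $\pi_n\from Q\to\mathbb B^n_i$ is projection onto the first $n$ coordinates (for a common $n\geq 2$) and $f_i\in C(\mathbb B^n_i,X)$. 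Apply $(2)$ in the limitation topology to adjust $\{f_i\}$ to $\{g_i\}$ with $\bigcup_i g_i(\mathbb B^n_i)$ closed in $X$ and each $g_i(\mathbb B^n_i)$ a homological $Z_n$-set; $(1)$ and Proposition \ref{z} then upgrade each $g_i(\mathbb B^n_i)$ to a $Z_n$-set, and the maps $h_i':=g_i\circ\pi_n$ furnish a $\mathcal U$-close approximation of $\{h_i\}$ with closed image union in $X$.

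The principal obstacle is promoting the ``closed union in $X$'' property to the genuine discreteness of $\{h_i'(Q)\}$ required by the discrete approximation property. This is handled inside the ambient $Q$-manifold $\overline X$: closedness of $\bigcup_i g_i(\mathbb B^n_i)$ in $X$ means that any $\overline X$-accumulation of the compact sets $g_i(\mathbb B^n_i)$ must lie on the $\sigma Z$-set $F$. Choose a countable decreasing sequence $\{O_k\}$ of open neighbourhoods of $F$ in $\overline X$ with $\bigcap_k O_k=F$, and iterate condition $(2)$ against a refining sequence of open covers of $X$ concentrated near $F$, using $Z$-set freedom inside the $Q$-manifold $\overline X$ to shift the $g_i$'s so that for every $k$ all but finitely many $g_i(\mathbb B^n_i)$ lie in $O_k$. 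The resulting family $\{g_i(\mathbb B^n_i)\}$ has no accumulation point in $X$, verifying the discrete approximation property and hence that $X$ is an $l_2$-manifold.
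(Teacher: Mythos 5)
Your sufficiency argument has two genuine gaps. First, the reduction of the discrete approximation property to a single finite dimension does not work: given $\{h_i\}\subset C(Q,X)$ and $\mathcal U\in\mathrm{cov}(X)$, each $h_i$ can indeed be $\mathcal U$-approximated by a map factoring through a finite-dimensional cube, but the dimension needed depends on the modulus of continuity of $h_i$ and on the Lebesgue number of $\mathcal U$ over the compactum $h_i(Q)$, so for an infinite family there is in general no common $n$, whereas condition $(2)$ fixes one $n$ for all summands. The paper avoids exactly this point by proving the discrete $n$-cells property for every $n$ and then invoking Bowers' theorem \cite{bo2} that the discrete $n$-cells property for all $n$ implies the discrete approximation property; some such ingredient (or a mixed-dimension version of $(2)$) is indispensable and is missing from your argument. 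Second, the step ``$(1)$ and Proposition \ref{z} then upgrade each $g_i(\mathbb B^n_i)$ to a $Z_n$-set'' is unjustified: Proposition \ref{z} requires the particular set to be a $Z_2$-set (or $LCC^1$) in addition to being a homological $Z_n$-set, and the disjoint disks property of $X$ does not make a given compact homological $Z_n$-set a $Z_2$-set. The paper instead first uses Proposition \ref{t} to see that the nice local compactification $\overline X$ is a $Q$-manifold and, since $F=\overline X\setminus X$ is a $\sigma Z$-set, shows (Claim 1) that each $C(\mathbb B^n,X)$ contains a dense $G_\delta$-set of genuine $Z_n$-maps; the $Z_n$-property is then arranged generically rather than by upgrading a fixed image.

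Your proposed cure for what you call the principal obstacle is also not viable: you cannot, while remaining $\mathcal U$-close to the given maps, push all but finitely many images into prescribed neighborhoods $O_k$ of $F$ --- if, say, all the original maps are constant at a point far from $F$ and $\mathcal U$ is fine, every $\mathcal U$-close family stays far from $F$. Discreteness has to be extracted from condition $(2)$ itself: pull $(2)$ back along the open surjective tail-restriction maps $p^n_k\colon C(\oplus_{i=1}^\infty\mathbb B^n_i,X)\to C(\oplus_{i=k}^\infty\mathbb B^n_i,X)$ to obtain a dense $G_\delta$-set of maps all of whose tail images $f(\oplus_{i\geq k}\mathbb B^n_i)$ are closed in $X$, intersect it with the dense open sets of maps satisfying $f(\mathbb B^n_k)\cap f(\mathbb B^n_l)=\varnothing$ for $k\neq l$ (dense thanks to the $Z_n$-maps of Claim 1), and note that closedness of all tails together with pairwise disjointness forces the family $\{f(\mathbb B^n_i)\}$ to be discrete in $X$; this is the paper's route to the discrete $n$-cells property. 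Your necessity direction is essentially correct in spirit, but the $G_\delta$ part of $(2)$ (in particular that closedness of the image union is a generic condition) is asserted rather than proved; the paper obtains it directly from Toru\'{n}czyk's theorem \cite{to3}, which provides a dense $G_\delta$-set of closed embeddings in $C(\oplus_{i=1}^\infty\mathbb B^n_i,X)$, combined with the fact that compacta in an $l_2$-manifold are $Z$-sets.
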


\begin{proof}
Let $\overline X$ be a locally compact $ANR$-compactification of $X$ such that $X=\overline X\setminus F$, where $F$ is a $\sigma Z$-set in
$\overline X$. Suppose $X$ satisfies conditions $(1)$ and $(2)$. Observe that, by condition $(2)$, each $C(\mathbb B^n,X)$ contains a dense set of homological $Z_n$-maps. Therefore, by Proposition \ref{t},
$\overline X$ is a $Q$-manifold.

\begin{claim}
Every $C(\mathbb B^n,X)$ contains a dense $G_\delta$-set of $Z_n$-maps.
\end{claim}
According to \cite{to1}, a given space $Y$ has the disjoint $n$-disks property iff all $Z_n$-maps in $C(\mathbb B^n,Y)$ form a dense and $G_\delta$-subset. Since $\overline X$ is a $Q$-manifolds, it has the disjoint $n$-disks property for every $n$ \cite{to1}, or equivalently, every $C(\mathbb B^n,\overline X)$ contains a dense set of $Z_n$-maps. Let $\{g_i\}_{i\geq 1}$ be a dense in $C(\mathbb B^n,\overline X)$ sequence
of $Z_n$-maps and
$D=\bigcup_{i\geq 1}g_i(\mathbb B^n)\cup F$. Then $D$ is a $\sigma Z_n$-subset of $\overline X$. Hence, every compactum in $X\setminus D$ is a $Z_n$-set in $\overline X$ and every map $f\in C(\mathbb B^n,\overline X)$ can be approximated by maps into $X\setminus D$. This implies that  $X$ has the disjoint $n$-disks property and $C(\mathbb B^n,X)$ contains a dense $G_\delta$-set of $Z_n$-maps.

\smallskip
%Condition $(1)$ implies that $X$ can not be locally compact at any point.
For every $n$ we fix a dense $G_\delta$-subset $\Lambda^n\subset\displaystyle C(\oplus_{i=1}^\infty\mathbb B^n_i,X)$ consisting of maps $f$ satisfying the condition
\begin{itemize}
\item [$(*)_n$]: $\displaystyle f(\oplus_{i=1}^\infty\mathbb B^n_i)\subset X$ is closed and each $f(\mathbb B^n_i)$ is a homological $Z_n$-set in $X$.
\end{itemize}
Then, for $k\geq 2$ the space $\displaystyle C(\oplus_{i=k}^\infty\mathbb B^n_i,X)$ contains a dense $G_\delta$-set $\Lambda_k^n$ of maps $f$ such  that
$\displaystyle f(\oplus_{i=k}^\infty\mathbb B^n_i)\subset X$ is closed and all $f(\mathbb B^n_i)$, $i\geq k$, are homological $Z_n$-sets in $X$.
Since each restriction map $\displaystyle p^n_k:C(\oplus_{i=1}^\infty\mathbb B^n_iX)\to C(\oplus_{i=k}^\infty\mathbb B^n_i,X)$ is open (see \cite{to3}) and surjective, all sets $(p^n_k)^{-1}(\Lambda^n_k)$, $k\geq 2$, are dense and $G_\delta$ in $C(\oplus_{i=1}^\infty\mathbb B^n_i,X)$. So is the set $\widetilde\Lambda^n=\bigcap_{k\geq 2}\Lambda^n\cap (p^n_k)^{-1}(\Lambda^n_k)$.
%(recall that $C(\oplus_{i=1}^\infty\mathbb B^n_i,X)$ with the limitation topology has the Baire property, see \cite{bo1}, \cite{to3}).

%\smallskip
%Let $\pi_k^n: C(\oplus_{i=1}^\infty\mathbb B^n_i,X)\to C(\mathbb B^n_k,X)$, $k\geq 1$, denote the restriction map, and $\mathcal Z_k(n)$ be the set of all
%$Z_n$-maps from $\mathbb B^n_k$ to $X$. We already observed that $\mathcal Z_k(n)$ is dense and $G_\delta$ in $C(\mathbb B^n_k,X)$.
%So, $\mathcal Z(n)=\bigcap_{k=1}^\infty(\pi_k^n)^{-1}(\mathcal Z_k(n))$ is dense and $G_\delta$-subset of
%$C(\oplus_{i=1}^\infty\mathbb B^n_i,X)$.

\smallskip
For every $k\neq l$ let $\Lambda^n_{kl}$ be the set of all maps $f\in C(\oplus_{i=1}^\infty\mathbb B^n_i,X)$ such that
$f(\mathbb B^n_k)\cap f(\mathbb B^n_l)=\varnothing$. Obviously, each $\Lambda^n_{kl}$ is open in $C(\oplus_{i=1}^\infty\mathbb B^n_i,X)$.
%$\Lambda^n_{kl}\cap$ is also open in $\widetilde\Lambda^n$.
\begin{claim}
Every $\Lambda^n_{kl}$ is dense in $C(\oplus_{i=1}^\infty\mathbb B^n_i,X)$.
\end{claim}

Indeed, let $f\in C(\oplus_{i=1}^\infty\mathbb B^n_i,X)$. Then the maps $f_k=f|\mathbb B^n_k$ and $f_l=f|\mathbb B^n_l$ can be approximated,
respectively, by maps $f_k'\in C(B^n_k,X)$ and $f_l'\in C(B^n_l,X)$ such that $f_k'(\mathbb B^n_k)\cap f_l'(\mathbb B^n_l)=\varnothing$.
 This can be done because $C(\mathbb B^n_k,X)$ and $C(\mathbb B^n_l,X)$ contain dense sets of $Z_n$-maps. Define a map $g\in C(\oplus_{i=1}^\infty\mathbb B^n_i,X)$ by $g|\mathbb B^n_k=f_k'$, $g|\mathbb B^n_l=g_l'$ and $g|\mathbb B^n_i=f|\mathbb B^n_i$ for all $i\notin\{k,l\}$.
Then $g$ is an approximation of $f$ and $g\in\Lambda^n_{kl}$.

Therefore, the set $\Gamma^n=\bigcap_{k\neq l}\Lambda^n_{kl}$ is dense and $G_\delta$ in  $C(\oplus_{i=1}^\infty\mathbb B^n_i,X)$.
Consequently, so is the set $\Gamma^n\cap\widetilde\Lambda^n$. Observe that $\Gamma^n\cap\widetilde\Lambda^n$ consists of maps $f$ satisfying the following conditions:
\begin{itemize}
\item $f(\mathbb B^n_k)\cap f(\mathbb B^n_l)=\varnothing$ for all $k\neq l$;
\item $\displaystyle f(\oplus_{i=k}^\infty\mathbb B^n_i)$ is a closed set in $X$ for all $k\geq 1$.
\end{itemize}
The last two conditions yields that the family $\{f(\mathbb B^n_i)\}_{i=1}^\infty$ is discrete in $X$ for all $f\in\Gamma^n\cap\widetilde\Lambda^n$. Hence, $X$ has the discrete $n$-cells property for every $n$, and by \cite{bo2}, $X$ has the discrete
approximation property. Finally, we apply Toru\'{n}czyk's \cite{to3} characterization of $l_2$-manifolds.

If $X$ is an $l_2$-manifold, then for every $n$ there is a dense $G_\delta$-set in $C(\oplus_{i=1}^\infty\mathbb B^n_i,X)$ consisting of closed embeddings, see \cite{to3}. Because every compact subset of an $l_2$-manifold is a $Z$-set, $X$ satisfies conditions $(1)$ and $(2)$ from Theorem 3.1.
\end{proof}

Since every closed subset of a homological $Z_n$-set is also homological $Z_n$-set, we have the following corollary.
\begin{cor}
A space $X$ having a nice $ANR$ local compactification is an $l_2$-manifold if and only if $X$ has the disjoint disks property
and for every $n$ the space $\displaystyle C(\oplus_{i=1}^\infty\mathbb B^n_i,X)$,  equipped with the limitation topology, contains a dense $G_\delta$-set of homological $Z_n$-maps.
\end{cor}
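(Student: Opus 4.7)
The plan is to derive this corollary directly from Theorem \ref{l2manifold}, using only the hereditary property of homological $Z_n$-sets and a standard fact about $Z$-sets in $l_2$-manifolds, so that essentially no new work is required beyond repackaging hypothesis (2) of the theorem.

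For the ``if'' direction, suppose $X$ has the disjoint disks property and, for every $n$, a dense $G_\delta$-set $\mathcal H^n\subset C(\oplus_{i=1}^\infty\mathbb B^n_i,X)$ of homological $Z_n$-maps. For any $f\in\mathcal H^n$ the image $f(\oplus_{i=1}^\infty\mathbb B^n_i)$ is by definition a closed homological $Z_n$-set in $X$. Each $f(\mathbb B^n_i)$ is a closed subset of this set, so by the hereditary property of homological $Z_n$-sets recalled in Section 1 each $f(\mathbb B^n_i)$ is itself a homological $Z_n$-set. Hence $\mathcal H^n$ witnesses condition $(2)$ of Theorem \ref{l2manifold} verbatim, and the theorem then forces $X$ to be an $l_2$-manifold.

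For the converse, assume $X$ is an $l_2$-manifold. The disjoint disks property is automatic. By Toru\'nczyk's characterization \cite{to3} the set $\mathcal E^n\subset C(\oplus_{i=1}^\infty\mathbb B^n_i,X)$ of closed embeddings is dense $G_\delta$ for every $n$. For any $f\in\mathcal E^n$ the image $f(\oplus_{i=1}^\infty\mathbb B^n_i)$ is a closed $\sigma$-compact subset of $X$ (a countable union of the compact sets $f(\mathbb B^n_i)$), and such a set is a $Z$-set in the $l_2$-manifold $X$. Since $X$ is $ANR$, hence $LC^n$ for every $n$, Proposition \ref{z} yields that this $Z$-set is also a homological $Z_n$-set, so each $f\in\mathcal E^n$ is a homological $Z_n$-map.

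The only point that is not literally a bookkeeping step is the claim that a closed $\sigma$-compact subset of an $l_2$-manifold is a $Z$-set, which I expect to be the main (and only) obstacle. This follows from the classical Bessaga--Pe\l czy\'nski fact that compact subsets of $l_2$-manifolds are $Z$-sets together with the countable additivity of $Z$-sets along closed unions; if one prefers to avoid that appeal, one can instead intersect $\mathcal E^n$ with a further dense $G_\delta$-set (as in the second half of the proof of Theorem \ref{l2manifold}) to make the family $\{f(\mathbb B^n_i)\}$ discrete, and then use that a closed discrete union of compact $Z$-sets in an $l_2$-manifold is a $Z$-set.
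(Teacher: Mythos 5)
Your proof is correct and takes essentially the same approach as the paper: the paper deduces the corollary from Theorem \ref{l2manifold} in one line via the fact that closed subsets of homological $Z_n$-sets are homological $Z_n$-sets, which is exactly your ``if'' direction. Your converse simply makes explicit what the paper leaves implicit (closed embeddings supplied by Toru\'{n}czyk's characterization have closed $\sigma$-compact images, which are $Z$-sets in an $l_2$-manifold and hence, by Proposition \ref{z}, homological $Z_n$-sets), and that reasoning is sound.
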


One can show that condition $(2)$ in Theorem 3.1 can be replaced by the following two conditions:
\begin{itemize}
\item[$(2')$] For every $n$ the space $C(\mathbb B^n,X)$ contains a dense set of homological $Z_n$-maps;
\item[$(2'')$] For every $n$ the space $\displaystyle C(\oplus_{i=1}^\infty\mathbb B^n_i,X)$ contains a dense $G_\delta$-set of maps with closed images.
\end{itemize}
\begin{cor}
A space $X$ having a nice $ANR$ local compactification is an $l_2$-manifold if and only if $X$ has the disjoint disks property and satisfies conditions $(2')$ and $(2'')$.
\end{cor}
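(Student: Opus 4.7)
The plan is to deduce this corollary directly from Theorem \ref{l2manifold} by showing that conditions $(2')$ and $(2'')$ together are equivalent to condition $(2)$ of that theorem. The ``only if'' direction of the corollary is immediate from the ``only if'' direction of Theorem \ref{l2manifold}: any dense $G_\delta$-set witnessing condition $(2)$ already witnesses $(2'')$ (maps with homological $Z_n$ images have, in particular, closed images), while pushing such a set forward under the continuous surjective restriction map $\displaystyle C(\oplus_{i=1}^\infty\mathbb B^n_i,X)\to C(\mathbb B^n_i,X)$ yields a dense set of homological $Z_n$-maps in $C(\mathbb B^n_i,X)$, i.e.\ $(2')$.

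For the ``if'' direction, assume $X$ has the disjoint disks property together with $(2')$ and $(2'')$, and fix $n\geq 2$. Let $\mathcal D^n\subset\displaystyle C(\oplus_{i=1}^\infty\mathbb B^n_i,X)$ be the dense $G_\delta$-set of maps with closed image supplied by $(2'')$. For each $i\in\mathbb N$ consider the restriction map $\displaystyle r_i\colon C(\oplus_{j=1}^\infty\mathbb B^n_j,X)\to C(\mathbb B^n_i,X)$. By the same argument used for $p^n_k$ in the proof of Theorem \ref{l2manifold} (attributed to \cite{to3}), $r_i$ is continuous, surjective and open: given a basic limitation neighborhood $B(f,\mathcal U)$, any $g'\in C(\mathbb B^n_i,X)$ that is $\mathcal U$-close to $f|\mathbb B^n_i$ extends to a member of $B(f,\mathcal U)$ by leaving $f$ unchanged on the remaining cells. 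By Lemma \ref{G-delta} the set $\Lambda^{(i)}_n\subset C(\mathbb B^n_i,X)$ of homological $Z_n$-maps is $G_\delta$, and by $(2')$ it is dense. Consequently each $r_i^{-1}(\Lambda^{(i)}_n)$ is a dense $G_\delta$-subset of $\displaystyle C(\oplus_{j=1}^\infty\mathbb B^n_j,X)$.

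Since the limitation topology on $\displaystyle C(\oplus_{j=1}^\infty\mathbb B^n_j,X)$ is Baire, the intersection
$$\mathcal H^n = \mathcal D^n \cap \bigcap_{i=1}^\infty r_i^{-1}(\Lambda^{(i)}_n)$$
is again dense and $G_\delta$. Every $f\in\mathcal H^n$ has closed image in $X$, and each $f|\mathbb B^n_i=r_i(f)$ is a homological $Z_n$-map, so $\mathcal H^n$ is exactly the dense $G_\delta$-set required by condition $(2)$ of Theorem \ref{l2manifold}. Applying Theorem \ref{l2manifold} then yields that $X$ is an $l_2$-manifold. The only delicate point in the argument is the openness of the restriction maps $r_i$ in the limitation topology, but this is essentially the same openness statement already used (and credited to \cite{to3}) in the proof of Theorem \ref{l2manifold}, so no new ideas are needed.
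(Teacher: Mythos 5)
Your argument is correct and is essentially the intended one: the paper leaves this corollary to the reader, and your derivation (pull back the dense $G_\delta$-set of homological $Z_n$-maps given by $(2')$ and Lemma \ref{G-delta} under the open, continuous, surjective restriction maps, intersect with the set from $(2'')$ using the Baire property of the limitation topology, and recover condition $(2)$ of Theorem \ref{l2manifold}) is exactly the same technique the paper itself uses with the maps $p^n_k$ in the proof of Theorem \ref{l2manifold}. The only blemish is the parenthetical justification in the ``only if'' direction --- a union of infinitely many compact homological $Z_n$-sets need not be closed --- but it is harmless since condition $(2)$ explicitly requires $f(\oplus_{i=1}^\infty\mathbb B^n_i)$ to be closed, so $(2'')$ follows immediately anyway.
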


Following \cite{di} we say that a subset $A\subset X$ is {\em almost strongly negligible} if for every open cover $\mathcal U$ of $X$ there is a homeomorphism $h$ from $X$ onto $X\setminus A$ that is $\mathcal U$-close to the identity of $X$.
We provide another version of Theorem 3.1.
\begin{thm}
Suppose $X$ has a nice $ANR$ local compactification $\overline X$. Then $X$ is an $l_2$-manifold if and only if $X$ satisfies the following conditions:
\begin{itemize}
\item[(1)] $X$ has the disjoint disks property;
\item[(2)] For every $n\geq 2$ the space $C(\mathbb B^n,X)$ contains a dense set of homological $Z_n$-maps;
\item[(3)] Every $Z$-subset of $X$ is almost strongly negligible.
\end{itemize}
\end{thm}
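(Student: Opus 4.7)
The plan is to reduce the sufficiency direction to Theorem \ref{l2manifold} by showing that (1)--(3) together imply condition (2) of that theorem, namely that for every $n\geq 2$ the space $C(\oplus_{i=1}^\infty\mathbb B_i^n,X)$ contains a dense $G_\delta$-set of maps $f$ with closed image and with each $f(\mathbb B_i^n)$ a homological $Z_n$-set. The necessity of (1)--(3) in an $l_2$-manifold is standard: every compactum is a $Z$-set, so (1) and (2) are automatic, while (3) is the classical almost strong negligibility of $Z$-sets in $l_2$-manifolds (as already noted in \cite{di}).

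For the nontrivial direction I would first combine (1), (2), and the nice $ANR$ local compactification hypothesis with Proposition \ref{t} to conclude that $\overline X$ is a $Q$-manifold. Arguing as in the Claim inside the proof of Theorem \ref{l2manifold}, it follows that $X$ has the disjoint $n$-disks property for every $n$ and that $C(\mathbb B^n,X)$ contains a dense $G_\delta$-set of $Z_n$-maps; because $X=\overline X\setminus F$ with $F$ a $\sigma Z$-set in the $Q$-manifold $\overline X$, the approximating images may be chosen to be $Z$-sets in $X$ itself.

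The crux is then to promote approximation by a single $Z$-map on $\mathbb B^n$ to approximation by maps on the disjoint union whose images form a discrete family of $Z$-sets. Given $f\in C(\oplus_{i=1}^\infty\mathbb B_i^n,X)$ and $\mathcal U\in\mathrm{cov}(X)$, I would proceed by an Anderson-style induction. At stage $k$, I choose a $Z$-map $g_k$ closely approximating $f|_{\mathbb B_k^n}$ and disjoint from $A_1\cup\cdots\cup A_{k-1}$ (where $A_j=g_j(\mathbb B_j^n)$); then I apply (3) to the $Z$-set $A_1\cup\cdots\cup A_k$ to obtain a homeomorphism $h_k\colon X\to X\setminus(A_1\cup\cdots\cup A_k)$ arbitrarily close to $\mathrm{id}_X$, and replace the tails $f|_{\mathbb B_j^n}$, $j>k$, with $h_k\circ f|_{\mathbb B_j^n}$ before continuing. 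With the controls at stage $k$ shrunk rapidly enough -- both in the limitation topology and relative to the minimum pairwise distance between the already constructed $A_j$ -- the process converges to a map $g\in B(f,\mathcal U)$ whose image $\bigcup_i g(\mathbb B_i^n)$ is closed and discrete, and by Proposition \ref{z} each $g(\mathbb B_i^n)$ is both a $Z$-set and a homological $Z_n$-set. The $G_\delta$-ness of the resulting set follows by combining Lemma \ref{G-delta} with the standard fact that the ``closed image'' condition is $G_\delta$ in the limitation topology, and then intersecting with the disjointness sets $\Lambda^n_{kl}$ introduced in the proof of Theorem \ref{l2manifold}.

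The main obstacle is the convergence of this inductive pushoff: since (3) supplies only a homeomorphism rather than a genuine strong $Z$-set condition, the metric controls on the $h_k$ must be chosen tightly enough that the infinite composition $\cdots\circ h_2\circ h_1$ converges to a continuous map $\mathcal U$-close to $f$ and that the constructed family $\{g(\mathbb B_i^n)\}$ really does remain discrete in the limit. This is the same technical core as in the discrete approximation arguments of \cite{bo2}, and once it is carried out the verification of the hypotheses of Theorem \ref{l2manifold} is complete.
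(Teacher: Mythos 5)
There is a genuine gap at precisely the step you flag as ``the main obstacle,'' and it is not merely a matter of choosing convergence estimates carefully. Your inductive pushoff only ever removes finite unions $A_1\cup\cdots\cup A_k$ of compacta, and no schedule of rapidly shrinking controls prevents the limit family $\{g(\mathbb B^n_i)\}_{i\geq 1}$ from accumulating at a point of $X$ lying outside all of the images. Indeed, if $f$ sends infinitely many blocks into a small neighbourhood of a point $x_0$, then every $g$ that is $\mathcal U$-close to $f$ must do the same; being disjoint from the finitely many previously constructed images (which is all that $h_k$ and your disjointness requirements give) is compatible with the sets $g(\mathbb B^n_i)$ converging to some fresh point $x^*\in X$, so the union need not be closed and the family need not be discrete. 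Discreteness is a statement about the residual set $\overline{g(\oplus_{i=1}^\infty\mathbb B^n_i)}\setminus g(\oplus_{i=1}^\infty\mathbb B^n_i)$, and nothing in your construction controls it; condition (3), applied at finite stages to finite unions of compact $Z$-sets, cannot create it, and tightening the metric controls on the $h_k$ does not help.

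The paper's proof deals with exactly this point by a different mechanism. It works with $\oplus_{i=1}^\infty Q_i$ and verifies the discrete approximation property directly: choosing a dense sequence $\{\varphi_k\}$ of $Z$-maps in $C(Q,X)$, it considers the sets $\Gamma_k$ of maps $f$ with $\bigl(f(\oplus_{i=1}^k Q_i)\cup\varphi_k(Q)\bigr)\cap\overline{f(\oplus_{i=k+1}^\infty Q_i)}=\varnothing$, and proves these are open and dense using the fact that every $Z$-set in $X$ is a \emph{strong} $Z$-set (\cite{bo2}) --- i.e., the \emph{closure} of the whole infinite tail can be pushed off a given $Z$-set, which is the strong $Z$-set condition you correctly note condition (3) does not supply. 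For $g$ in the intersection $\Gamma=\bigcap_k\Gamma_k$ the residual set $A_g$ misses every $\varphi_k(Q)$ and is therefore itself a $Z$-set; only then is condition (3) invoked, once, to produce a small homeomorphism $h\colon X\to X\setminus A_g$, after which $\{h(g(Q_i))\}$ is discrete. This two-step scheme (strong $Z$-sets to force the residual set to be a $Z$-set, almost strong negligibility to delete it) is what your sketch is missing. A secondary issue: your reduction to Theorem \ref{l2manifold} also needs the set of maps with closed image to be $G_\delta$ in the limitation topology, which you assert as standard but do not establish; the paper sidesteps this by proving the discrete approximation property directly and quoting Toru\'{n}czyk's characterization \cite{to3}, rather than feeding the conclusion back through condition (2) of Theorem \ref{l2manifold}.
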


\begin{proof}
It is well known that every $l_2$-manifold satisfies the three conditions. For the inverse implication, observe that $\overline X$ is a $Q$-manifold (see the proof of Theorem~\ref{l2manifold}). Hence, $C(Q,\overline X)$ contains a dense set of $Z$-maps, see \cite{to1}.
This implies that $C(Q,X)$ contains a dense set $\mathcal Z$ of $Z$-maps because $X$ is the complement of an $\sigma Z$-set in $\overline X$, see the proof of Claim~1 in Theorem~\ref{l2manifold}.
We choose a countable dense subset $\{\varphi_k\}_{k\geq 1}$ of $\mathcal Z$, and consider the sets
$$\Gamma_k=\{f\in C(\oplus_{i=1}^\infty Q_i,X): \big(f(\oplus_{i=1}^k Q_i)\cup\varphi_k(Q)\big)\cap\overline{f(\oplus_{i=k+1}^\infty Q_i)}=\varnothing\}, k\geq 1.$$
\begin{claim}
Each $\Gamma_k$, $k\geq 1$, is open and dense in $C(\oplus_{i=1}^\infty Q_i,X)$.
\end{claim}
The openness of $\Gamma_k$ is obvious. To show the density, choose $f\in C(\oplus_{i=1}^\infty Q_i,X)$ and an open cover $\mathcal U$ of $X$. We may assume that all sets $f(Q_i)$, $i\leq k$, are $Z$-sets in $X$. Since every $Z$-set in $X$ is a strong $Z$-set (see \cite{bo2}), so is the set $f(\oplus_{i=1}^k Q_i)\cup\varphi_k(Q)$. Consequently, there is a map
$g\in C(\oplus_{i=k+1}^\infty Q_i,X)$ such that $g$ is $\mathcal U$-close to the restriction map $f|\oplus_{i=k+1}^\infty Q_i$
and $\overline{g(\oplus_{i=k+1}^\infty Q_i)}\cap\big(f(\oplus_{i=1}^k Q_i)\cup\varphi_k(Q)\big)=\varnothing$. Finally, the map
$f'\in C(\oplus_{i=1}^\infty Q_i,X)$, defined by $f'|Q_i=f|Q_i$ for $i\leq k$ and $f'|Q_i=g|Q_i$ for $i\geq k+1$, is $\mathcal U$-close to $f$ and $f'\in\Gamma_k$.

Since $C(\oplus_{i=1}^\infty Q_i,X)$ with the limitation topology is a Baire space, the set $\Gamma=\bigcap_{k\geq 1}\Gamma_k$ is dense in $C(\oplus_{i=1}^\infty Q_i,X)$. Observe that $f\in\Gamma$ implies that $f(Q_i)\cap f(Q_j)=\varnothing$ for all
$i\neq j$ and  $A_f=\overline{f(\oplus_{i=1}^\infty Q_i)}\setminus f(\oplus_{i=1}^\infty Q_i)$ is a closed set in $X$ disjoint from each $\varphi_k(Q)$, $k\geq 1$. Because $\{\varphi_k\}_{k\geq 1}$ is dense in $C(Q,X)$, $A_f$ are $Z$-sets of $X$. We can complete the proof by showing that $X$ has the discrete approximation property. To this end, let $\mathcal U, \mathcal V$ be open covers of $X$ such that $\mathcal V$ is a star-refinement of $\mathcal U$ and $f\in C(\oplus_{i=1}^\infty Q_i,X)$. We first take $g\in\Gamma$ that is $\mathcal V$-close to $f$. Since  $A_g$ is a $Z$-set in $X$, there is a homeomorphism $h$ from $X$ onto $X\setminus A_g$ that is $\mathcal V$-close to the identity of $X$.
Then, $\widetilde f=h\circ g$ is $\mathcal U$-close to $f$ and $\{\widetilde f(Q_i)\}_{i\geq 1}$ is a discrete family in $X$.
\end{proof}

Next theorem is a homological version of Toru\'{n}czyk's \cite{to3} characterization of $l_2$-manifolds.
\begin{thm}
An $ANR$ space $X$ is an $l_2$-manifold if and only if $X$ has the discrete $2$-disks property and $\displaystyle C(\oplus_{i=1}^\infty Q_i,X)$,  equipped with the limitation topology, contains a dense $G_\delta$-set of maps
$f$ satisfying the following condition:
\begin{itemize}
\item[(*)] $\displaystyle f(\oplus_{i=1}^\infty Q_i)$ is closed and each $f(Q_i)$ is a homological $Z_\infty$-set in $X$.
\end{itemize}
\end{thm}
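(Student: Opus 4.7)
The necessity is routine. Every $l_2$-manifold satisfies the discrete $n$-cells property for every $n$ as a consequence of Toru\'{n}czyk's characterization \cite{to3}, hence in particular the discrete $2$-disks property. Moreover, Toru\'{n}czyk's theorem provides a dense $G_\delta$-subset of closed embeddings in $C(\oplus_{i=1}^\infty Q_i, X)$, and every compact subset of an $l_2$-manifold is a $Z$-set and hence a homological $Z_\infty$-set; this yields $(*)$.

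For sufficiency, the strategy is to verify the discrete approximation property of $X$ and then invoke Toru\'{n}czyk's \cite{to3} $l_2$-manifold characterization. I would follow closely the template of Theorem~\ref{l2manifold}, replacing the balls $\mathbb B^n_i$ by Hilbert cubes $Q_i$. Fix the given dense $G_\delta$-set $\Lambda\subset C(\oplus_{i=1}^\infty Q_i,X)$ of maps satisfying $(*)$. Using the open and surjective restriction maps $p_k\colon C(\oplus_{i=1}^\infty Q_i,X)\to C(\oplus_{i=k}^\infty Q_i,X)$ together with analogous dense $G_\delta$-sets $\Lambda_k$ of $(*)$-type maps on the tails, form $\widetilde\Lambda=\Lambda\cap\bigcap_{k\geq 2}p_k^{-1}(\Lambda_k)$, which is dense and $G_\delta$ in $C(\oplus_{i=1}^\infty Q_i,X)$ and whose members $f$ satisfy that $f(\oplus_{i\geq k}Q_i)$ is closed in $X$ for every $k\geq 1$. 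For each pair $k\neq l$ put $\Lambda_{kl}=\{f: f(Q_k)\cap f(Q_l)=\varnothing\}$; this is clearly open. Granted density of every $\Lambda_{kl}$, the set $\Gamma=\widetilde\Lambda\cap\bigcap_{k\neq l}\Lambda_{kl}$ is dense and $G_\delta$ in $C(\oplus_{i=1}^\infty Q_i,X)$ by the Baire property of the limitation topology, and for each $f\in\Gamma$ the images $\{f(Q_i)\}_{i=1}^\infty$ are pairwise disjoint with all tails closed, so this family is discrete in $X$. This gives $X$ the discrete approximation property, and Toru\'{n}czyk's characterization then identifies $X$ as an $l_2$-manifold.

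The main obstacle is the density of $\Lambda_{kl}$. Here the hypotheses would be leveraged as follows. The discrete $2$-disks property gives $X\in\rm{DD^2P}$, and restricting maps in $\Lambda$ through any embedding $\mathbb B^n\hookrightarrow Q$ shows that $C(\mathbb B^n,X)$ contains a dense set of homological $Z_\infty$-maps (hence homological $Z_n$-maps) for every $n$; Theorem~\ref{t1} then yields $X\in\rm{DD^nP}$ for every $n$. To obtain the required separation, the crux is to promote the dense set of homological $Z_\infty$-maps in $C(Q,X)$ provided by $(*)$ into a dense set of genuine $Z$-maps in $C(Q,X)$, by coupling homological $Z_\infty$-approximations with $Z_2$-approximations supplied by $\rm{DD^2P}$ and invoking Proposition~\ref{z} to conclude that the resulting images are $Z_n$-sets for all $n$. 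Once dense $Z$-maps in $C(Q,X)$ are available, density of $\Lambda_{kl}$ follows by a standard push-off argument: approximate $f|Q_k$ by a $Z$-map $g_k$, then approximate $f|Q_l$ by $g_l\colon Q_l\to X\setminus g_k(Q_k)$, leaving the other $f|Q_i$ essentially unchanged. Producing these dense $Z$-maps without a nice $ANR$ local compactification --- the device that carried Theorem~\ref{l2manifold} --- is where the real work of the argument lies.
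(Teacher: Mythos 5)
Your overall skeleton (necessity from Toru\'{n}czyk's theorem, then $\widetilde\Lambda$ via the open surjective restriction maps, the sets $\Lambda_{kl}$, Baire category in the limitation topology, the discrete approximation property, Toru\'{n}czyk again) is exactly the paper's, which indeed proceeds ``as in the proof of Theorem \ref{l2manifold}''. But at the step you yourself single out as the crux --- producing a dense set of genuine $Z$-maps in $C(Q,X)$ so that each $\Lambda_{kl}$ is dense --- there is a genuine gap, and the route you sketch would not close it as stated. You downgrade the hypothesis to $X\in\rm{DD^2P}$ and propose to ``couple homological $Z_\infty$-approximations with $Z_2$-approximations supplied by $\rm{DD^2P}$''; but $\rm{DD^2P}$ only yields $Z_2$-maps of $\mathbb B^2$, and it does not by itself allow you to approximate a map of the Hilbert cube by maps whose image is a $Z_2$-set, which is what Proposition \ref{z} requires before it can upgrade a compact homological $Z_\infty$-set to a $Z$-set. (One could try to push this through by first producing a dense family of maps of $\mathbb B^2$ whose images are simultaneously $Z_2$-sets and homological $Z_\infty$-sets --- hence $Z$-sets --- and then pushing $Q$ off their union as in Theorem \ref{t1}, but you do not carry this out; you explicitly leave it as ``where the real work lies''.)

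The missing idea is that the hypothesis is the discrete $2$-cells property, which is strictly stronger than $\rm{DD^2P}$: mapping infinitely many copies of $\mathbb B^2$ by the same map and making the images a discrete family shows that \emph{every} compact subset of $X$ is a $Z_2$-set. Combined with Proposition \ref{z} (and the fact that a set is a $Z$-set iff it is a $Z_n$-set for every $n$), every compact homological $Z_\infty$-subset of $X$ is therefore already a $Z$-set. Hence the maps supplied by condition $(*)$ are themselves $Z$-maps on each $Q_i$ --- no promotion step is needed, and no substitute for the nice $ANR$ local compactification that carried Theorem \ref{l2manifold} has to be invented: $(*)$ together with the discrete $2$-cells property immediately gives dense $G_\delta$-sets of $Z$-maps in each $C(Q_i,X)$, after which the density of the $\Lambda_{kl}$ and the remainder of your argument go through exactly as in Theorem \ref{l2manifold}.
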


\begin{proof}
Toru\'{n}czyk's characterization of $l_2$-manifolds guarantees that every $l_2$-manifold satisfies the hypothesis of Theorem 3.3. For the other implication, we first observe that every compact subset of $X$ is a $Z_2$-set because $X$ has the discrete $2$-disks property.
%Indeed, fix a compact  set $K\subset X$ and a map $h:\mathbb B^2\to X$. Consider the map $\widetilde h:\oplus_{i=1}^\infty\mathbb B^2_i\to X$ with
%$\widetilde h|\mathbb B^2_i=h$ for all $i$. Then $\widetilde h$ can be approximated by maps $g:\oplus_{i=1}^\infty\mathbb B^2_i\to X$ such that
%$\{g(\mathbb B^2_i)\}_{i\geq 1}$ is a discrete family in $X$. Since $K$ is compact, there is $j$ with $g(\mathbb B^2_j)\cap K=\varnothing$. Therefore, $h$ is approximated by maps whose images don't meet $K$.
Thus, every compact homological $Z_\infty$-subset of $X$ is a $Z$-set in $X$. So, condition $(*)$ implies that each $C(Q_i,X)$ contains a dense $G_\delta$-set of $Z$-maps. Then, proceeding as in the proof of Theorem 3.1, we can show that $C(\oplus_{i=1}^\infty Q_i,X)$ contains a dense subset $\Gamma$ such that
$\{f(Q_i)\}_{i\geq 1}$ is a discrete family in $X$ for all $f\in\Gamma$. Therefore,
$X$ has the discrete approximation property.
\end{proof}

\begin{cor}
An $ANR$ space $X$ is an $l_2$-manifold if and only if $X$ has the discrete $2$-cells property and
$\displaystyle C(\oplus_{i=1}^\infty Q_i,X)$ contains a dense $G_\delta$-set of homological $Z_\infty$-maps.
\end{cor}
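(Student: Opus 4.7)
The plan is to reduce this corollary to Theorem 3.3 by using the hereditariness of the homological $Z_n$-property with respect to closed subsets, together with the standard correspondence between closed discrete unions and $Z$-sets in $l_2$-manifolds. The discrete $2$-cells property in the corollary and the discrete $2$-disks property in Theorem 3.3 refer to the same notion (since a $2$-cell is $\mathbb B^2$), so the first condition matches in both statements, and the only real work is in matching the second condition.

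For the sufficiency direction, suppose $X$ is an $ANR$ with the discrete $2$-cells property such that $C(\oplus_{i=1}^\infty Q_i,X)$ contains a dense $G_\delta$-set $\mathcal H$ of homological $Z_\infty$-maps. For every $f\in\mathcal H$, the image $f(\oplus_{i=1}^\infty Q_i)$ is by definition a homological $Z_\infty$-set in $X$, hence closed in $X$. Each $f(Q_i)$ is compact, thus closed in $X$, and is contained in the homological $Z_\infty$-set $f(\oplus_{i=1}^\infty Q_i)$, so by the hereditariness of the homological $Z_n$-property recorded in the introduction, each $f(Q_i)$ is itself a homological $Z_\infty$-set in $X$. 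Therefore every $f\in\mathcal H$ satisfies condition $(*)$ of Theorem 3.3, and Theorem 3.3 yields that $X$ is an $l_2$-manifold.

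For the necessity direction, suppose $X$ is an $l_2$-manifold. Then Toru\'{n}czyk's characterization gives $X$ the discrete approximation property, which in particular implies the discrete $2$-cells property. Moreover, it provides a dense $G_\delta$-set $\mathcal D\subset C(\oplus_{i=1}^\infty Q_i,X)$ consisting of maps $f$ for which the family $\{f(Q_i)\}_{i\geq 1}$ is discrete in $X$. For any such $f$, the image $f(\oplus_{i=1}^\infty Q_i)=\bigcup_i f(Q_i)$ is a closed discrete union of compacta. Since every compact subset of an $l_2$-manifold is a $Z$-set, each $f(Q_i)$ is a $Z$-set, and a straightforward approximation argument using discreteness (only finitely many $f(Q_i)$ meet a given compact set) shows that a closed discrete union of $Z$-sets is again a $Z$-set. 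As $l_2$-manifolds are $LC^\infty$, Proposition \ref{z} then implies that $f(\oplus_{i=1}^\infty Q_i)$ is a homological $Z_\infty$-set, i.e., $f$ is a homological $Z_\infty$-map. Thus $\mathcal D$ is the required dense $G_\delta$-set.

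The main obstacle is the upgrade, in the necessity direction, from the statement ``each $f(Q_i)$ is a homological $Z_\infty$-set,'' which Theorem 3.3 delivers directly, to the stronger global statement that $f(\oplus_{i=1}^\infty Q_i)$ is a homological $Z_\infty$-set. One cannot invoke countable additivity of the homological $Z_n$-property as such, since only finite additivity was recorded in the introduction; this is precisely why the argument above passes through discreteness of the family and the $l_2$-manifold fact that closed discrete unions of $Z$-sets are $Z$-sets.
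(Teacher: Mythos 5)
Your proposal is correct and follows essentially the route the paper intends: sufficiency is exactly the reduction to Theorem 3.3 via closedness of the image of a homological $Z_\infty$-map and the hereditariness of homological $Z_n$-sets with respect to closed subsets, and necessity uses the standard $l_2$-manifold facts (the dense $G_\delta$-set of closed embeddings from \cite{to3}, whose images are closed discrete unions of compacta, hence $Z$-sets, hence homological $Z_\infty$-sets by Proposition \ref{z}). The only cosmetic point is that the dense $G_\delta$-set of maps with discrete image families is most easily obtained as the set of closed embeddings the paper already cites, rather than directly from the discrete approximation property.
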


\noindent
\textbf{Acknowledgements.}
The authors would like to express their gratitude to J. West for his careful reading and helpful comments.

\end{document}